\RequirePackage{fix-cm}

\documentclass[12pt, reqno]{amsart}

\usepackage{amssymb, amsmath, amsthm, multirow}
\usepackage[backref]{hyperref}
\usepackage[alphabetic,backrefs,lite]{amsrefs}
\usepackage{amscd}   % for commutative diagrams
\usepackage{fullpage}
\usepackage[all]{xy} % for complicated commutative diagrams
\usepackage{lscape}

% Note: if you have problems with the Sha in cyrillic font,
% change the following ``paragraph'' to \newcommand{\Sha}{{\operatorname{Sha}}}
\DeclareFontEncoding{OT2}{}{} % to enable usage of cyrillic fonts

% note: cmr might work in place of wncyr

% Color comments!
\usepackage{color}
% Color comments

% Table commands
       %% lines 10\% further apart
   %% extra height in table entry

% Theorems

\newtheorem{lemma}{Lemma}[section]

\newtheorem{proposition}[lemma]{Proposition}
\newtheorem{prop}[lemma]{Proposition}
\newtheorem{cor}[lemma]{Corollary}

\newtheorem{claim*}{Claim}
\newtheorem{thm}[lemma]{Theorem}

\theoremstyle{definition}
\newtheorem{remark}[lemma]{Remark}
\newtheorem{remarks}[lemma]{Remarks}

%Characters
\newcommand{\Aff}{{\mathbb A}}
\newcommand{\G}{{\mathbb G}}

\newcommand{\PP}{{\mathbb P}}

\newcommand{\F}{{\mathbb F}}
\newcommand{\Q}{{\mathbb Q}}
\newcommand{\R}{{\mathbb R}}
\newcommand{\Z}{{\mathbb Z}}

\newcommand{\Xbar}{{\overline{X}}}
\newcommand{\Qbar}{{\overline{\Q}}}

\newcommand{\kbar}{{\overline{k}}}

\newcommand{\Ybar}{{\overline{Y}}}

\newcommand{\Adeles}{{\mathbb A}}
\newcommand{\kk}{{\mathbf k}}

\newcommand{\Qtilde}{\widetilde{Q}}
\newcommand{\Ctilde}{\widetilde{C}}
\newcommand{\Dtilde}{\widetilde{D}}

% Surfaces
\newcommand{\Kabc}{K_{a,b,c}}
\newcommand{\Xabc}{X_{\mathbf{a}}}
\newcommand{\Yabc}{Y_{\mathbf{a}}}
\newcommand{\Yabcp}{Y_{\mathbf{a},p}}
\newcommand{\Yabcseven}{Y_{\mathbf{a},7}}
\newcommand{\Yabceleven}{Y_{\mathbf{a},11}}
\newcommand{\Yabcpi}{Y_{\mathbf{a},p_i}}
\newcommand{\Xabcbar}{\overline{X}_{\mathbf{a}}}
\newcommand{\Yabcbar}{\overline{Y}_{\mathbf{a}}}
\newcommand{\Yabcpbar}{\overline{Y}_{\mathbf{a},p}}
\newcommand{\Yabcsevenbar}{\overline{Y}_{\mathbf{a},7}}
\newcommand{\Yabcelevenbar}{\overline{Y}_{\mathbf{a},11}}

% mathcal characters
\newcommand{\calA}{{\mathcal A}}

\newcommand{\OO}{{\mathcal O}}

% Math operators
\DeclareMathOperator{\HH}{H}

\DeclareMathOperator{\rk}{rk}

\DeclareMathOperator{\im}{im}

\DeclareMathOperator{\Gal}{Gal}

\DeclareMathOperator{\Br}{Br}

\DeclareMathOperator{\Pic}{Pic}

\DeclareMathOperator{\Num}{Num}

\DeclareMathOperator{\Proj}{Proj}

\DeclareMathOperator{\et}{et}

\DeclareMathOperator{\red}{red}

% Commands

\newcommand{\isom}{\cong}

\newcommand{\hideqed}{\renewcommand{\qed}{}} %% to suppress `\qed'

\numberwithin{equation}{section}
\numberwithin{table}{section}

\newcommand{\defi}[1]{\textsf{#1}} % for defined terms

\title[Failure of the Hasse principle]{Failure of the Hasse principle for Enriques surfaces}

\author{Anthony V\'arilly-Alvarado}
\author{Bianca Viray}
\thanks{The second author was partially supported by NSF Grant DMS-0841321 and a Ford Foundation Dissertation Fellowship.  This collaboration was partially supported by Rice University.}

\address{Department of Mathematics, Rice University, Houston, TX 77005, USA}
\email{varilly@rice.edu}
\urladdr{http://www.math.rice.edu/\~{}av15}
\address{Department of Mathematics, Box 1917, Brown University, Providence, RI
			02912, USA}
\email{bviray@math.brown.edu}
\urladdr{http://math.brown.edu/\~{}bviray}

\date{}

%%%%%%%%%%%%%%%%%%%%%%%%%%%%%%%%%%%%%%%%%%%%%%%%%%%%%%%%%%%%%%%%%%%%%%%%%%%%%%%%
%%%%%%%%%%%%%%%%%%%%		Beginning of Document		%%%%%%%%%%%%%%%%%%%%%%%%
%%%%%%%%%%%%%%%%%%%%%%%%%%%%%%%%%%%%%%%%%%%%%%%%%%%%%%%%%%%%%%%%%%%%%%%%%%%%%%%%
\begin{document}
	
	\fontsize{11.8}{13.5}\selectfont
	\vspace{-.35in}
	\begin{abstract}
		We construct an Enriques surface $X$ over $\Q$ with empty \'etale-Brauer set (and hence no rational points) for which there is no algebraic Brauer-Manin obstruction to the Hasse principle. In addition, if there is a transcendental obstruction on $X$, then we obtain a K3 surface that has a transcendental obstruction to the Hasse principle.
	\end{abstract}
	
	\subjclass[2010]{Primary 11G35; Secondary 14G05, 14G25, 14G40}
	
	\maketitle
	\vspace{-.1in}
	%%%%%%%%%%%%%%%%%%%%%%%%%%%%%%%%%%%%%%%%%%%%%%%%%%%%%%%%%%%%%%%%%%%%%%%%%%%%
	\section{Introduction}%%%%%%%%%%%%%%%%%%%%%%%%%%%%%%%%%%%%%%%%%%%%%%%%%%%%%%
	%%%%%%%%%%%%%%%%%%%%%%%%%%%%%%%%%%%%%%%%%%%%%%%%%%%%%%%%%%%%%%%%%%%%%%%%%%%%

		Let $X$ be a smooth, projective, geometrically integral scheme over a 
		number field $k$.  We say that $X$ satisfies the \defi{Hasse principle}
		if the set $X(k)$ of $k$-rational points is nonempty whenever the set of 
		adelic points $X(\Adeles_k)$ is also nonempty.  Manin and Skorobogatov 
		have defined intermediate ``obstruction sets'' that fit between $X(k)$ 
		and $X(\Adeles_k)$ (cf. \S\ref{sect: background} 
		or~\cite{Manin-obstruction,Skorobogatov-beyondmanin}):
			\begin{equation}
				\label{eq: obstruction sets}
				X(k) \subseteq X(\Adeles_k)^{\et,\Br} \subseteq 
				X(\Adeles_k)^{\Br} \subseteq
				X(\Adeles_k)^{\Br_1} \subseteq X(\Adeles_k).
			\end{equation}
		Lind and Reichardt, Harari, Skorobogatov, and Poonen constructed the 
		first schemes that show, respectively, that each above containment (from 
		right to left) can be 
		strict~\cite{Lind-HP,Reichardt-HP,Harari-transcendental, 
		Skorobogatov-beyondmanin, Poonen-insufficiency}. 

		A wide open area of research considers the finer question: to what 
		extent do the sets in~\eqref{eq: obstruction sets} give distinct 
		obstructions to the Hasse principle \emph{after} fixing some numerical 
		invariants, like dimension, of X? For curves, Scharaschkin and 
		Skorobogatov independently asked if $X(\Adeles_k)^{\Br_1} \neq 
		\emptyset$ implies that $X(k) \neq \emptyset$, i.e., if the algebraic 
		Brauer-Manin obstruction explains all counterexamples to the Hasse 
		principle~\cite{Scharaschkin-thesis, Skorobogatov-torsors}.  (This 
		question has since been upgraded to a conjecture~\cite{Flynn-BM, 
		Poonen-heuristics, Stoll-BM}.)  In the case of geometrically rational 
		surfaces, Colliot-Th\'el\`ene and Sansuc conjectured that the same 
		implication holds~\cite{CTS-descent}. In contrast, for most other 
		Enriques-Kodaira classes of surfaces we expect that this is no longer 
		the case.

		However, there are strikingly few examples of surfaces that corroborate 
		this expectation.  In a pioneering paper, Skorobogatov constructs the 
		first surface for which the failure of the Hasse principle is not 
		explained by an algebraic Brauer-Manin obstruction (or, for that matter, 
		a transcendental Brauer-Manin 
		obstruction)~\cite{Skorobogatov-beyondmanin}; the other known example, 
		due to Basile and Skorobogatov, is of a similar 
		nature~\cite{BS-beyondmanin}. In both cases, the surfaces considered are 
		bi-elliptic and the failure is caused by an \'etale-Brauer obstruction, 
		i.e. $X(\Adeles_k)^{\et, \Br} = \emptyset$.

		We show that Enriques surfaces give rise to a similar insufficiency 
		phenomenon. More precisely, our main result is as follows.
		\begin{thm}\label{thm:main}
			There exists an Enriques surface $X/\Q$ such that 
			\[
				X(\Adeles_{\Q})^{\et,\Br} = \emptyset \quad\textup{and}\quad
				X(\Adeles_{\Q})^{\Br_1} \neq \emptyset.
			\]
			Moreover, if $X(\Adeles_{\Q})^{\Br} = \emptyset$, then 
			$Y(\Adeles_{\Q})^{\Br Y \setminus\Br_1Y} = \emptyset$, where $Y$ is 
			a K3 double cover of $X$.
		\end{thm}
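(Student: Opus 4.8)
The plan is to exploit the fact that an Enriques surface $X$ has $\pi_1^{\et}(\Xbar)\cong\Z/2\Z$, so that the K3 double cover $\pi\colon Y\to X$ is the universal cover and every connected finite \'etale cover of $X$ is a quadratic twist $Y_d\to X$ indexed by $d\in\Q^*/\Q^{*2}$. Torsors under any finite group factor through this $\Z/2\Z$-cover, so the \'etale-Brauer set collapses to a single layer of descent,
\[
 X(\Adeles_{\Q})^{\et,\Br}=\bigcup_{d\in\Q^*/\Q^{*2}}\pi_d\left(Y_d(\Adeles_{\Q})^{\Br Y_d}\right),
\]
and both assertions become statements about the K3 covers $Y_d$.

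For the first assertion I would choose the defining data of $X$ so that, on the one hand, the algebraic Brauer group $\Br_1 X/\Br_0 X\cong\HH^1(\Q,\Pic\Xbar)$ produces no obstruction, exhibiting a point of $X(\Adeles_{\Q})^{\Br_1}$; and, on the other hand, so that every twist $Y_d$ is eliminated. The cleanest way to arrange the latter is to force all but finitely many twists to have no adelic points via local congruence conditions at a fixed finite set of primes, and for each surviving twist to produce an \emph{algebraic} Brauer class on the K3 surface $Y_d$, computed from its explicit geometry (for instance an elliptic or conic-bundle fibration), whose evaluation is constant and nonzero on $Y_d(\Adeles_{\Q})$. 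This gives $Y_d(\Adeles_{\Q})^{\Br_1 Y_d}=\emptyset$ for all $d$, hence $X(\Adeles_{\Q})^{\et,\Br}=\emptyset$, while the first condition guarantees $X(\Adeles_{\Q})^{\Br_1}\neq\emptyset$.

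For the second assertion I would track the transcendental class through the cover. Since $\Br\Xbar\cong\Z/2\Z$ and $\pi^*$ annihilates this group, a generator $\beta$ of $\Br X/\Br_1 X$ satisfies $\pi^*\beta\in\Br_1 Y$; writing $\pi^*\beta=A+\iota^*A$ for the covering involution $\iota$ (using $\pi^*\pi_*=1+\iota^*$) yields a genuinely transcendental class $A\in\Br Y$ that is $\iota$-anti-invariant on $\Br\Ybar$. Functoriality of the Brauer-Manin pairing then gives, for any lift $y$ of $x=\pi(y)\in X(\Adeles_{\Q})$, the identity $\langle\beta,x\rangle=\langle A,y\rangle+\langle A,\iota(y)\rangle$. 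The aim is to show $\pi\bigl(Y(\Adeles_{\Q})^{\Br Y\setminus\Br_1 Y}\bigr)\subseteq X(\Adeles_{\Q})^{\Br}$: a point orthogonal to all transcendental classes on $Y$ should map to a point orthogonal to $\beta$ on $X$, and the emptiness hypothesis $X(\Adeles_{\Q})^{\Br}=\emptyset$ then forces $Y(\Adeles_{\Q})^{\Br Y\setminus\Br_1 Y}=\emptyset$ for the distinguished twist $Y$ with $Y(\Adeles_{\Q})\neq\emptyset$.

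The main obstacle I anticipate is the transcendental bookkeeping in this last step. One must establish that the single class $A$ — rather than some larger, hard-to-compute transcendental subgroup of $\Br Y$ — already accounts for the obstruction, and one must control the algebraic interference: in evaluating $\langle A,\iota(y)\rangle=\langle\iota^*A,y\rangle$ and the pullbacks $\langle\pi^*\alpha,y\rangle$ for $\alpha\in\Br_1 X$, the discrepancy between $\iota^*A$ and $-A$ lives in $\Br_1 Y$ and must be shown to vanish on the relevant adelic points. Making the correspondence $\beta\leftrightarrow A$ precise through the $\iota$-action on $\Br\Ybar$ and the transfer $\pi_*\colon\Br Y\to\Br X$, and checking it on a well-chosen twist $Y$ with $Y(\Adeles_{\Q})\neq\emptyset$, is where the real content of the theorem lies.
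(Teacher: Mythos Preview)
Your outline for the first assertion is broadly correct and matches the paper's strategy: kill all but finitely many twists $Y^{(d)}$ by local solvability, and for the surviving twist(s) exhibit an explicit algebraic Brauer class that obstructs. Two refinements are worth noting. First, the paper does not produce the obstructing class on $Y$ from a fibration on $Y$ itself, but by mapping $Y$ onto a degree-$4$ del Pezzo surface $S$ of Birch--Swinnerton-Dyer and pulling back the known quaternion class from $\Br S$; this is what makes the example computable. Second, showing $X(\Adeles_\Q)^{\Br_1}\neq\emptyset$ requires a precise computation of $\HH^1(\Q,\Pic\Xbar)$, which in turn rests on determining $\Pic\Ybar$ exactly (an upper bound on the rank via reduction mod $7$ and $11$, explicit generators, and a saturation argument). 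This is the bulk of the paper's work and is not a formality.

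Your treatment of the ``moreover'' clause contains a genuine error. You assert that $\pi^*\colon\Br\Xbar\to\Br\Ybar$ annihilates the nontrivial class of $\Br\Xbar\cong\Z/2\Z$, and then try to manufacture a transcendental class $A$ on $Y$ via $\pi_*$ and an anti-invariance argument. In fact, whether $\pi^*$ is zero or injective on $\Br\Xbar$ depends on the Enriques surface, and Beauville's criterion says $\pi^*$ is injective precisely when there is no $D\in\Pic\Ybar$ with $D^2\equiv 2\pmod 4$ and $\sigma(D)=-D$. The paper's explicit computation of $\Pic\Ybar$ is used to verify this criterion holds, so $\pi^*$ is \emph{injective}. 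The argument is then immediate: take $\calA'\in\Br X\setminus\Br_1 X$ giving the (assumed) transcendental obstruction on $X$; then $f^*\calA'\in\Br Y\setminus\Br_1 Y$ by injectivity of $\pi^*$ on geometric Brauer groups, and functoriality of the pairing gives $Y(\Adeles_\Q)^{f^*\calA'}=\emptyset$ directly from the partition of $X(\Adeles_\Q)^{\Br}$. Your route through $\pi^*\pi_*=1+\iota^*$ and an anti-invariant $A$ is both unnecessary and built on the wrong hypothesis about $\pi^*$.
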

		Note the rather curious dichotomy: we obtain either a K3 surface with a 
		transcendental Brauer-Manin obstruction to the Hasse principle, or an 
		Enriques surface whose failure of the Hasse principle is unaccounted for 
		by a Brauer-Manin obstruction.

		It is important to remark that Cunnane already showed that Enriques 
		surfaces need not satisfy the Hasse principle~\cite{Cunnane-thesis}; 
		however, his counterexamples are explained by an algebraic Brauer-Manin 
		obstruction.  On the other hand, Harari and Skorobogatov, and later 
		Cunnane, showed that the Brauer-Manin obstruction is insufficient to 
		explain all failures of weak approximation on Enriques 
		surfaces~\cites{HS-Enriques, Cunnane-thesis}, thereby opening up the 
		analogous question for the Hasse principle.  Theorem~\ref{thm:main} 
		represents a key step towards a complete answer to this question.

		%%%%%%%%%%%%%%%%%%%%%%%%%%%%%%%%%%%%%%%%%%%%%%%%%%%%%%%%%%%%%%%%%%%%%%%%
		\subsection{Outline of proof}%%%%%%%%%%%%%%%%%%%%%%%%%%%%%%%%%%%%%%%%%%%
		\label{ss: outline}%%%%%%%%%%%%%%%%%%%%%%%%%%%%%%%%%%%%%%%%%%%%%%%%%%%%%
		%%%%%%%%%%%%%%%%%%%%%%%%%%%%%%%%%%%%%%%%%%%%%%%%%%%%%%%%%%%%%%%%%%%%%%%%

			The proof of Theorem~\ref{thm:main} is constructive. Let 
			$\mathbf{a} := (a,b,c) \in\Z^3$, and consider the intersection 
			$\Yabc$ of the three quadrics
				\begin{align*}
					xy+ 5z^2 & = s^2\\
					(x + y)(x + 2y) & = s^2 - 5t^2\\
					ax^2 + by^2 + cz^2 & = u^2.
				\end{align*}
			 in $\PP^5 = \Proj\Q[s,t,u,x,y,w]$. Suppose that
				\[
					abc(5a + 5b + c)(20a + 5b + 2c)(4a^2 + b^2)(c^2 - 100ab)
					(c^2 + 5bc + 10ac + 25ab) \neq 0.
				\] 
			Then $\Yabc$ is smooth and thus defines a K3 surface.  The 
			involution 
				\[
					\sigma:\PP^5 \to \PP^5,\qquad 
					(s:t:u:x:y:z)\mapsto(-s:-t:-u:x:y:z)
				\]
			has no fixed points when restricted to $\Yabc$, so 
			$\Xabc := \Yabc/\sigma$ is an Enriques surface.  

			\begin{thm}\label{thm:main-abc}
				Let $\mathbf{a} = (a,b,c) \in \Z_{>0}^3$ satisfy the following 
				conditions:
				\begin{enumerate}
					\item for all prime numbers $p\ |\ (5a + 5b + c)$, 
							$5$ is not a square modulo $p$,
					\item for all prime numbers $p\ |\ (20a + 5b + 2c)$,  
							$10$ is not a square modulo $p$,
					\item the quadratic form $ax^2 + by^2 + cz^2 + u^2$ is 
							anisotropic over $\Q_3$,
					\item the integer $-bc$ is not a square modulo $5$,
					\item the triplet $(a,b,c)$ is congruent to $(5,6,6)$ 
							modulo $7$,
					\item the triplet $(a,b,c)$ is congruent to $(1,1,2)$ 
							modulo $11$,
					\item $\Yabc(\Adeles_{\Q}) \neq \emptyset$, and 
					\item the triplet $(a,b,c)$ is \defi{Galois general}, 
					meaning that a certain number field defined in terms of 
					$a,b,c$ is as large as possible.  A precise definition is 
					given in \S\ref{subsec:galoisgeneral}.
				\end{enumerate}
				Then 
					\[
						\Xabc(\Adeles_{\Q})^{\et,\Br} = \emptyset 
						\quad\textup{and}\quad
						\Xabc(\Adeles_{\Q})^{\Br_1} \neq \emptyset.
					\]
				Moreover, if $\Xabc(\Adeles_{\Q})^{\Br} = \emptyset$, then  
				$\Yabc(\Adeles_{\Q})^{\Br\setminus\Br_1} = \emptyset$.
			\end{thm}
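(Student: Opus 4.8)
The plan is to prove the three assertions in turn, the common engine being descent along the K3 double cover $\pi\colon\Yabc\to\Xabc$ together with an analysis of its quadratic twists. Since $\Yabc$ is a K3 surface it is geometrically simply connected, so $\pi$ is the universal cover of $\Xabc$ and $\pi_1^{\et}(\Xabcbar)\cong\Z/2$; hence the only torsors relevant to the étale--Brauer set are $\pi$ and its twists, and the descent formalism gives
\[
  \Xabc(\Adeles_\Q)^{\et,\Br}\ \subseteq\ \bigcup_{d\in\Q^{\times}/\Q^{\times2}}\pi^{d}\bigl(\Yabc^{d}(\Adeles_\Q)^{\Br}\bigr),
\]
where $\Yabc^{d}$ is the twist of $\pi$ by $d$, namely the intersection of $xy+5z^2=ds^2$, $(x+y)(x+2y)=d(s^2-5t^2)$ and $ax^2+by^2+cz^2=du^2$ (the coordinates $s,t,u$, on which $\sigma$ acts by $-1$, acquire a factor of $d$). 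The first assertion thus reduces to showing $\Yabc^{d}(\Adeles_\Q)^{\Br}=\emptyset$ for every square-class $d$.

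I would handle the twists in two regimes. For all but finitely many square-classes I expect a purely local obstruction, the decisive one coming from condition (3): whenever $-d$ is a square in $\Q_3$ the form $ax^2+by^2+cz^2-du^2$ becomes the anisotropic form $ax^2+by^2+cz^2+u^2$ over $\Q_3$, so $\Yabc^{d}(\Q_3)=\emptyset$; the congruence conditions (4)--(6) at $5$, $7$, $11$ eliminate further square-classes and fix the local behaviour used below. This should leave only finitely many $d$, supported on a controlled set of primes. For each such residual $d$ I would exhibit a genuine (algebraic) Brauer--Manin obstruction coming from quaternion classes attached to the conic-bundle structures of the first two quadrics---algebras of the shape $(5,\ell)$ and $(10,\ell')$ for suitable linear forms---and compute their invariants at the primes dividing $5a+5b+c$ and $20a+5b+2c$. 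Conditions (1) and (2), that $5$ respectively $10$ be a nonsquare modulo each such prime, make the corresponding Hilbert symbols nontrivial, so that $\sum_v\inv_v\mathcal A_d$ is a nonzero constant on $\Yabc^{d}(\Adeles_\Q)$ and the set is empty. Carrying out this matching uniformly---pairing each residual twist with the correct prime and verifying the obstruction survives twisting---is the step I expect to be the main obstacle.

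For the remaining two assertions the key structural fact is that pullback along the étale cover annihilates algebraic Brauer classes. Indeed $\pi$ is unramified, so $\pi^*K_X=K_Y=0$ and the generator of $\Pic(\Xabcbar)_{\tors}\cong\Z/2$ dies in $\Pic\Yabcbar$; together with the vanishing $\HH^1(\Q,\Num\Xabcbar)=0$ forced by the Galois-general hypothesis (8), this makes the map $\HH^1(\Q,\Pic\Xabcbar)\to\HH^1(\Q,\Pic\Yabcbar)$ zero, i.e. $\pi^*\Br_1\Xabc\subseteq\Br\Q$. Consequently, for any $(P_v)\in\Yabc(\Adeles_\Q)$ and any $\alpha\in\Br_1\Xabc$, global reciprocity gives $\sum_v\inv_v\alpha(\pi(P_v))=\sum_v\inv_v(\pi^*\alpha)(P_v)=0$, so $\pi\bigl(\Yabc(\Adeles_\Q)\bigr)\subseteq\Xabc(\Adeles_\Q)^{\Br_1}$. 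Since $\Yabc(\Adeles_\Q)\neq\emptyset$ by condition (7), this yields $\Xabc(\Adeles_\Q)^{\Br_1}\neq\emptyset$, the second assertion.

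Finally, assume $\Xabc(\Adeles_\Q)^{\Br}=\emptyset$. Since $\Br\Xabcbar\cong\Z/2$, the quotient $\Br\Xabc/\Br_1\Xabc$ injects into $(\Br\Xabcbar)^{G_\Q}=\Z/2$ and is generated by a single class $\mathcal A$, so $\Xabc(\Adeles_\Q)^{\Br}=\Xabc(\Adeles_\Q)^{\Br_1}\cap\{\,Q:\textstyle\sum_v\inv_v\mathcal A(Q)=0\,\}$. Fix $(P_v)\in\Yabc(\Adeles_\Q)$; by the previous paragraph $\pi((P_v))\in\Xabc(\Adeles_\Q)^{\Br_1}$, and since the intersection above is empty, $\pi((P_v))$ cannot be orthogonal to $\mathcal A$. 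Thus $\sum_v\inv_v(\pi^*\mathcal A)(P_v)=\sum_v\inv_v\mathcal A(\pi(P_v))\neq0$, so $\pi^*\mathcal A$ obstructs every adelic point of $\Yabc$. It remains to check that $\pi^*\mathcal A$ is transcendental, i.e. lies in $\Br\Yabc\setminus\Br_1\Yabc$; this amounts to the injectivity of $\Br\Xabcbar\to\Br\Yabcbar$ on the $\Z/2$, the delicate geometric point, which I would establish using the Galois-general hypothesis on the transcendental lattices. Granting it, $\Yabc(\Adeles_\Q)^{\Br\setminus\Br_1}=\emptyset$, as desired.
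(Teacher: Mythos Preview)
Your overall architecture matches the paper's---descent along the K3 cover for the étale--Brauer vanishing, killing of algebraic Brauer classes under pullback for $\Br_1$-nonemptiness, and Beauville's injectivity criterion for the transcendental conclusion---and your arguments for the second and third assertions are essentially correct. The paper phrases the second via the partition $\Xabc(\Adeles_\Q)^{\Br_1}=\bigcup_\tau f^\tau\bigl(\Yabc^\tau(\Adeles_\Q)\bigr)$, a consequence of $\HH^1(G_\Q,\Pic\Xabcbar)\cong\HH^1(G_\Q,\langle K_{\Xabc}\rangle)$ together with Skorobogatov's descent theorem, rather than your direct claim that $f^*$ annihilates $\Br_1\Xabc$ modulo constants; the two are equivalent. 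For the third, Beauville's criterion is checked on $\Pic\Yabcbar$, not the transcendental lattice, and requires the full integral basis---so it uses conditions (5), (6), and (8), not (8) alone.

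The first assertion, however, has the roles of the hypotheses inverted, and as written contains a real gap. Condition (3) only kills the twists with $-d$ a square in $\Q_3$, a single coset of index $4$ in $\Q_3^\times/\Q_3^{\times2}$; conditions (4)--(6) do not kill infinitely many further classes---indeed (5) and (6) are not used in this step at all, they serve only later to bound $\rho(\Yabc)$ via reduction mod $7$ and $11$. So your plan has no mechanism to reduce to finitely many $d$. In the paper that reduction comes from conditions (1) and (2), used \emph{locally} rather than for Brauer--Manin: if a squarefree $d$ has a prime factor $p\neq 2,5$, then modulo $p$ the twisted equations force $(x+y)(x+2y)\equiv 0$, and plugging $x\equiv -y$ (resp.\ $x\equiv -2y$) into the remaining two congruences, the hypothesis that $5$ (resp.\ $10$) is a nonsquare modulo every prime dividing $5a+5b+c$ (resp.\ $20a+5b+2c$) forces $x\equiv y\equiv z\equiv 0\pmod p$, whence $\Yabc^{(d)}(\Z/p^2\Z)=\emptyset$. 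This leaves $d\in\langle -1,2,5\rangle$; positivity of $a,b,c$ handles $d<0$ over $\R$, condition (3) handles $d\in\{2,5\}$, condition (4) handles $d=10$. The only Brauer--Manin computation is for the untwisted $d=1$, and it comes not from (1) or (2) but from pulling back the Birch--Swinnerton-Dyer algebra $\bigl(5,(x+y)/x\bigr)$ along the projection $\Yabc\to S$ to the degree-$4$ del Pezzo cut out by the first two quadrics.
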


			Theorem~\ref{thm:main} follows almost at once from 
			Theorem~\ref{thm:main-abc}: we show that the triplet $\mathbf{a} = 
			(12,111,13)$ satisfies conditions $(1)$--$(8)$.

			\begin{remarks}
				\begin{enumerate}
					\item[(i)] In lieu of a paper outline, let us explain the 
					role that the assumptions of Theorem~\ref{thm:main-abc} play 
					in our constructions. In \S\ref{sec:noQpoints}, we show that 
					conditions $(1)$--$(4)$ imply the \'etale-Brauer set of 
					$\Xabc$ is empty.  In \S\ref{sec:Picard}, we use conditions 
					$(5)$, $(6)$, and $(8)$ to describe explicit generators for 
					the Picard groups of $\Xabc$ and $\Yabc$, as well as to 
					compute the low degree Galois cohomology of these groups.  
					In \S\ref{sec:BrSet}, we use the results from 
					\S\ref{sec:Picard} together with conditions $(5)$--$(8)$ to 
					determine both the Brauer set and algebraic Brauer set of 
					$\Xabc$.  Finally, in \S\ref{sec:proofofprops}, we prove 
					Theorems~\ref{thm:main-abc} and~\ref{thm:main}.

					\item[(ii)] Our construction relies heavily on an example 
					due to Birch and Swinnerton-Dyer of a del Pezzo surface of 
					degree $4$ that violates the Hasse 
					principle~\cite{BSD-hasse}.  While we expect the 
					construction will work with other such surfaces, there are a 
					few subtleties which are not readily apparent in the 
					argument.  We elaborate on this point in 
					\S\ref{subsec:dp4details}.
				\end{enumerate}
			\end{remarks}

		%%%%%%%%%%%%%%%%%%%%%%%%%%%%%%%%%%%%%%%%%%%%%%%%%%%%%%%%%%%%%%%%%%%%%%%%	
		\subsection{Notation}%%%%%%%%%%%%%%%%%%%%%%%%%%%%%%%%%%%%%%%%%%%%%%%%%%%
		%%%%%%%%%%%%%%%%%%%%%%%%%%%%%%%%%%%%%%%%%%%%%%%%%%%%%%%%%%%%%%%%%%%%%%%%
			Throughout $k$ denotes a perfect field, $\kbar$ is a fixed algebraic 
			closure and $G_k$ denotes the absolute Galois group $\Gal(\kbar/k)$.  
			For any $k$-scheme $X$, we write $\Xbar$ for the base change 
			$X\times_k\kbar$ and, if $X$ is integral, $\kk(X)$ for the function 
			field of $X$.  For a smooth, projective, geometrically integral 
			variety $X$ we identify $\Pic X$ with the Weil class group; in 
			particular, we use additive notation for the group law on $\Pic X$. 
			In addition, we write $K_X$ for the class of the canonical sheaf in 
			the group $\Pic X$. Finally, we denote by $\rho(X)$ the geometric 
			Picard number of $X$, i.e., the rank of the N\'eron-Severi group of 
			$\Xbar$.

			For a homogeneous ideal $I$ in a graded ring $R$, we write $V(I)$ 
			for $\Proj R/I$; if explicit generators of $I = \langle f_i : i \in 
			S\rangle$ are given, then we write $V(f_i : i \in S)$ instead of 
			$V(I)$.

			Henceforth, ``condition(s)'' refers to items $(1)$--$(8)$ in 
			Theorem~\ref{thm:main-abc}.

		%%%%%%%%%%%%%%%%%%%%%%%%%%%%%%%%%%%%%%%%%%%%%%%%%%%%%%%%%%%%%%%%%%%%%%%%
		\subsection*{Acknowledgements}%%%%%%%%%%%%%%%%%%%%%%%%%%%%%%%%%%%%%%%%%%
		%%%%%%%%%%%%%%%%%%%%%%%%%%%%%%%%%%%%%%%%%%%%%%%%%%%%%%%%%%%%%%%%%%%%%%%%
			We thank Bjorn Poonen and Brendan Hassett for many helpful 
			conversations.  We are indebted to Michael Stoll and Damiano Testa 
			for providing the ideas behind the proof of 
			Prop~\ref{prop:saturation}.  We also thank Jean-Louis 
			Colliot-Th\'el\`ene, Daniel Erman, Matthias Sch\"utt, Damiano Testa, 
			Yuri Tschinkel, and Olivier Wittenberg for several comments.  All 
			computations were done using {\tt Magma}~\cite{magma}.	

	%%%%%%%%%%%%%%%%%%%%%%%%%%%%%%%%%%%%%%%%%%%%%%%%%%%%%%%%%%%%%%%%%%%%%%%%%%%%
	\section{Background}\label{sect: background}%%%%%%%%%%%%%%%%%%%%%%%%%%%%%%%%
	%%%%%%%%%%%%%%%%%%%%%%%%%%%%%%%%%%%%%%%%%%%%%%%%%%%%%%%%%%%%%%%%%%%%%%%%%%%%

		%%%%%%%%%%%%%%%%%%%%%%%%%%%%%%%%%%%%%%%%%%%%%%%%%%%%%%%%%%%%%%%%%%%%%%%%
		\subsection{K3 surfaces and Enriques surfaces}%%%%%%%%%%%%%%%%%%%%%%%%%%
		%%%%%%%%%%%%%%%%%%%%%%%%%%%%%%%%%%%%%%%%%%%%%%%%%%%%%%%%%%%%%%%%%%%%%%%%
			Assume that the characteristic of $k$ is not $2$.  A \defi{K3 
			surface} is a smooth projective $k$-surface $X$ of Kodaira dimension 
			$0$ with trivial canonical divisor and $h^1(X,\OO_X) = 0$.  The 
			geometric Picard group of a K3 surface is a free abelian group of 
			rank at most $22$ (in characteristic $0$, the rank is at most $20$).  
			In addition, the intersection lattice of a K3 surface can be 
			embedded primitively in $U^{\oplus3}\oplus E_8(-1)^{\oplus2}$, the 
			unique even unimodular lattice of signature $(3,19)$.

			An \defi{Enriques surface} is a smooth projective $k$-surface $X$ of 
			Kodaira dimension $0$ with numerically trivial canonical divisor and 
			second Betti number $b_2 = 10$.  These assumptions imply that $K_X 
			\neq 0$ and $2K_X = 0$.  Enriques surfaces are also characterized in 
			terms of K3 surfaces as 
			follows~\cite[Proposition~1.3.1]{CD-Enriques}
			~\cite[Proposition~VIII.17]{Beauville-surfaces}.
			\begin{thm}
				Let $X$ be an Enriques surface and let $f\colon Y\to X$ be any 
				\'etale double cover associated to 
				$K_X \in \left(\Pic X\right)[2]$.  Then $Y$ is a K3 surface.  
				Conversely, the quotient of a K3 surface by a fixed-point free 
				involution is an Enriques surface.
			\end{thm}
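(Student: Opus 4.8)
The plan is to build the \'etale double cover explicitly from a $2$-torsion class and then verify the two lists of defining properties; since the invariants involved (the second Betti number $b_2$, the $\ell$-adic Euler characteristic $e$, and the coherent groups $H^i(-,\OO)$) are geometric, I may freely pass to $\kbar$. As $\Char k \neq 2$, a class $L\in(\Pic X)[2]$ together with a chosen trivialization $L^{\otimes 2}\isom\OO_X$ makes $\OO_X\oplus L$ into an $\OO_X$-algebra, and $f\colon Y:=\SPEC_X(\OO_X\oplus L)\to X$ is finite \'etale of degree $2$ with $f_*\OO_Y=\OO_X\oplus L$ (using $L\isom L^{-1}$). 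Two facts drive the forward direction. First, $f$ \'etale gives $\Omega_Y=f^*\Omega_X$, hence $K_Y=f^*K_X$. Second---and here it is essential that the cover is built from $K_X$ itself---the inclusion $L\hookrightarrow f_*\OO_Y$ corresponds by adjunction to a nowhere-vanishing section of $f^*L$, so $f^*L\isom\OO_Y$; taking $L=K_X$ yields $K_Y=f^*K_X\isom\OO_Y$. Thus $Y$ has trivial canonical class and so Kodaira dimension $0$. For the vanishing $h^1(\OO_Y)=0$ I would compute $H^1(Y,\OO_Y)=H^1(X,f_*\OO_Y)=H^1(X,\OO_X)\oplus H^1(X,K_X)$ (the Leray spectral sequence degenerates as $f$ is finite); the first summand is zero because an Enriques surface has irregularity $q=0$, and the second is zero by Serre duality, $H^1(X,\OO_X(K_X))\isom H^1(X,\OO_X)^\vee$. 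Finally $K_X\neq0$ makes the cover connected, so $Y$ is a smooth, integral, projective surface with $K_Y=0$ and $h^1(\OO_Y)=0$, i.e.\ a K3 surface.

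For the converse, let $Y$ be a K3 surface and $\sigma$ a fixed-point-free involution, with quotient $f\colon Y\to X=Y/\sigma$. Fixed-point-freeness makes $X$ smooth and $f$ finite \'etale of degree $2$, so $f_*\OO_Y=\OO_X\oplus M$ for the $2$-torsion class $M$ defining the cover, and again $K_Y=f^*K_X$. From $0=H^1(Y,\OO_Y)=H^1(X,\OO_X)\oplus H^1(X,M)$ I read off $q=h^1(X,\OO_X)=0$. Next I would pin down $K_X$. Since $K_Y=\OO_Y$ we have $f^*K_X\isom\OO_Y$; applying $f_*$ and the projection formula gives $K_X\oplus(K_X+M)\isom f_*\OO_Y=\OO_X\oplus M$, and comparing determinants in $\Pic X$ yields $2K_X=0$. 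The one genuinely delicate point is to exclude $K_X=0$, which I would settle numerically. As $M$ is $2$-torsion, hence numerically trivial, Riemann--Roch gives $\chi(M)=\chi(\OO_X)$, so $2=\chi(\OO_Y)=\chi(\OO_X)+\chi(M)=2\chi(\OO_X)$ and hence $\chi(\OO_X)=1$; combined with $q=0$ this forces $p_g=h^0(X,\OO_X(K_X))=0$, so $K_X$ is not effective and in particular $K_X\neq0$. Being $2$-torsion, $K_X$ is numerically trivial, and its plurigenera $h^0(nK_X)$ alternate between $1$ and $0$, so $X$ has Kodaira dimension $0$.

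It remains to compute the second Betti number. Multiplicativity of the Euler characteristic in the degree-$2$ \'etale cover gives $e(X)=\tfrac12 e(Y)=12$, and since $q=0$ forces $b_1(X)=0$, the identity $e(X)=2-2b_1(X)+b_2(X)$ yields $b_2(X)=10$. Hence $X$ is a smooth projective surface of Kodaira dimension $0$ with numerically trivial canonical class and $b_2=10$, i.e.\ an Enriques surface. The crux of the whole argument is the single step excluding $K_X=0$, and this is exactly where fixed-point-freeness is indispensable: it is what guarantees that $f$ is \'etale, hence that the defining class $M$ is $2$-torsion and that the Euler-characteristic bookkeeping produces $\chi(\OO_X)=1$ and $p_g=0$. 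Were $\sigma$ to acquire fixed points, $X$ would be singular and $M$ would fail to be torsion, and the conclusion would break down; the absence of fixed points is precisely what makes the quotient Enriques.
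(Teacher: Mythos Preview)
Your argument is correct and is essentially the standard proof one finds in the references the paper cites. Note, however, that the paper does not give its own proof of this theorem: it is quoted as background, with the attribution \cite[Proposition~1.3.1]{CD-Enriques} and \cite[Proposition~VIII.17]{Beauville-surfaces}. So there is no ``paper's proof'' to compare against beyond those sources, and your write-up tracks the usual line of reasoning there (build the cover as $\SPEC_X(\OO_X\oplus K_X)$, use $f_*\OO_Y=\OO_X\oplus K_X$ together with Serre duality for $h^1(\OO_Y)=0$; for the converse, use multiplicativity of $\chi(\OO)$ and of the topological Euler characteristic under an \'etale double cover to get $\chi(\OO_X)=1$, $p_g=0$, and $b_2=10$).

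One small remark on robustness: when you pass from $q=0$ to $b_1(X)=0$ you are implicitly using that $h^1(\OO_X)=0$ forces $\Pic^0_{X/k}$ to be zero-dimensional, hence $\dim\operatorname{Alb}(X)=0$ and $b_1=2\dim\operatorname{Alb}(X)=0$. This is fine in the generality of the paper (perfect $k$, $\Char k\neq 2$), but it is worth saying explicitly rather than invoking the Hodge-theoretic identity $b_1=2q$, which is a characteristic-zero statement.
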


			%%%%%%%%%%%%%%%%%%%%%%%%%%%%%%%%%%%%%%%%%%%%%%%%%%%%%%%%%%%%%%%%%%%%
			\subsubsection{Geometric birational models of Enriques surfaces}%%%%
			\label{subsec:ModelsEnr}%%%%%%%%%%%%%%%%%%%%%%%%%%%%%%%%%%%%%%%%%%%%
			%%%%%%%%%%%%%%%%%%%%%%%%%%%%%%%%%%%%%%%%%%%%%%%%%%%%%%%%%%%%%%%%%%%%
				Assume that $k$ is algebraically closed.  A generic Enriques 
				surface can be constructed as follows.  Consider a K3 surface 
				$Y$ of degree $8$ in $\PP^5_k = \Proj k[s,t,u,x,y,z]$ given by
					\begin{equation}
						\label{eq: general double cover}
						V\big( Q_i(s,t,u) - \Qtilde_i(x,y,z):
						\quad i = 1, 2, 3 \big),
					\end{equation}
				where $Q_i \in k[s,t,u]$ and $\Qtilde_i \in k[x,y,z]$ are 
				quadratic polynomials for $i = 1,2,3$. For generic $Q_i$ and 
				$\Qtilde_i$, the involution $\sigma$ of \S\ref{ss: outline} has 
				no fixed points when restricted to $Y$, so $X := Y/\sigma$ is an 
				Enriques surface.
				
				Cossec and Verra show that a stronger statement holds:  Let 
				$f\colon Y \to X$ be the double cover of an Enriques surface $X$ 
				by a $K3$ surface $Y$. Then there exists a birational map 
				between $Y$ and a surface of the form~\eqref{eq: general double 
				cover} that identifies $\sigma$ with the unique fixed-point free 
				involution $\iota$ such that 
				$f\circ\iota=f$~\cite{Cossec-projmodels, Verra-Enriques}.

			%%%%%%%%%%%%%%%%%%%%%%%%%%%%%%%%%%%%%%%%%%%%%%%%%%%%%%%%%%%%%%%%%%%%	
			\subsubsection{The Picard group}\label{subsec:PicEnr}%%%%%%%%%%%%%%%
			%%%%%%%%%%%%%%%%%%%%%%%%%%%%%%%%%%%%%%%%%%%%%%%%%%%%%%%%%%%%%%%%%%%%
				For any Enriques surface $X$, we have the following exact 
				sequence of Galois modules~\cite[Theorem~1.2.1 and 
				Proposition~1.2.1]{CD-Enriques}
				\begin{equation}
					\label{eqn:PicEnr}
					0\to \langle K_X\rangle \to \Pic \Xbar \to \Num \Xbar \to 0. 
				\end{equation}
				Additionally, $\Num \Xbar \isom U \oplus E_8(-1)$, the unique 
				even unimodular lattice of rank $10$ and signature 
				$(1,9)$~\cite[Theorem~2.5.1]{CD-Enriques}.  We also have an 
				exact sequence relating the geometric Picard group of $X$ and 
				that of any K3 double cover $Y$ of 
				$X$~\cite[Equation~(3.15)]{HS-Enriques}
				\begin{equation}
					\label{eq: Pic fixed by sigma}
					0 \to \Z/2\Z \to \Pic \Xbar \to 
						\left(\Pic \Ybar\right)^{\sigma} \to 0.
				\end{equation}
				Combining~\eqref{eqn:PicEnr} and~\eqref{eq: Pic fixed by sigma} 
				we obtain an isomorphism of Galois modules $\Num\Xbar\cong 
				\left(\Pic\Ybar\right)^\sigma$.
		
		%%%%%%%%%%%%%%%%%%%%%%%%%%%%%%%%%%%%%%%%%%%%%%%%%%%%%%%%%%%%%%%%%%%%%%%%
		\subsection{The Brauer group and the Brauer-Manin obstruction}%%%%%%%%%%
		%%%%%%%%%%%%%%%%%%%%%%%%%%%%%%%%%%%%%%%%%%%%%%%%%%%%%%%%%%%%%%%%%%%%%%%%

			For the remainder of \S\ref{sect: background}, we restrict to the 
			case where $k$ is a number field.  Let $\Br X := 
			\HH^2_{\et}(X,\G_m)$ be the Brauer group of a variety $X$.  An 
			element $\calA \in \Br X$ is \defi{algebraic} if it belongs to the 
			subgroup $\Br_1 X := \ker\left(\Br X \to \Br \Xbar\right)$; 
			otherwise $\calA$ is called \defi{transcendental}.

			Functoriality of the Brauer group yields an evaluation pairing
			\[
				\langle\,\cdot\, , \cdot\,\rangle\colon 
				X(\Adeles_k) \times \Br X {\longrightarrow} \Q/\Z.
			\]
			For any set $S \subseteq \Br X$ ($S$ need not be a subgroup), this 
			pairing is used to define the set
			\[
				X(\Adeles_k)^{S} = \left\{ (P_v)_v \in X(\Adeles_k) : 
				\langle (P_v), \calA\rangle = 0 
				\textup{ for all } \calA \in S \right\}.
			\]
			Class field theory guarantees that $X(k) \subseteq 
			X(\Adeles_k)^{S}$, for any $S$. When $S = \Br X$ and $\Br_1 X$, 
			respectively, we obtain the \defi{Brauer set} of $X$ and the 
			\defi{algebraic Brauer set} of $X$; we denote these sets by 
			$X(\Adeles_k)^{\Br}$ and $X(\Adeles_k)^{\Br_1}$. In summary, we have
			\[
				X(k) \subseteq X(\Adeles_k)^{\Br} \subseteq X(\Adeles_k)^{\Br_1}
				\subseteq X(\Adeles_k).
			\]
			We say there is a \defi{Brauer-Manin obstruction} to the Hasse 
			principle if $X(\Adeles_k)^{\Br} = \emptyset$ and $X(\Adeles_k) \neq 
			\emptyset$.  The obstruction is \defi{algebraic} if in addition 
			$X(\Adeles_k)^{\Br_1} = \emptyset$. See	
			~\cite[\S5.2]{Skorobogatov-torsors} for more details.

		%%%%%%%%%%%%%%%%%%%%%%%%%%%%%%%%%%%%%%%%%%%%%%%%%%%%%%%%%%%%%%%%%%%%%%%%
		\subsection{Torsors under finite \'etale groups and the \'etale-Brauer %
		obstruction}%%%%%%%%%%%%%%%%%%%%%%%%%%%%%%%%%%%%%%%%%%%%%%%%%%%%%%%%%%%%
		%%%%%%%%%%%%%%%%%%%%%%%%%%%%%%%%%%%%%%%%%%%%%%%%%%%%%%%%%%%%%%%%%%%%%%%%

			Let $G$ be an fppf group scheme over a scheme $X$.  Recall that a 
			\defi{(right) $G$-torsor over $X$} is an fppf $X$-scheme $Y$ 
			equipped with a right $G$-action such that the morphism $Y\times_X G 
			\to Y\times_X Y$ given by $(y, g) \mapsto (y, yg)$ is an 
			isomorphism. A detailed account of torsors is given in~\cite[Part 
			I]{Skorobogatov-torsors}. A torsor $f\colon Y\to X$ under a finite 
			\'etale $k$-group scheme $G$ determines a partition
			\[
				X(k) = \bigcup_{\tau \in \HH^1(k, G)} 
					f^{\tau}\left(Y^{\tau}(k)\right),
			\]
			where $Y^{\tau}$ is the twisted torsor associated to $\tau$ 
			(see~\cite[Lemma 2.2.3]{Skorobogatov-torsors}). Running over all 
			possible $G$-torsors of this form, we assemble the 
			\defi{\'etale-Brauer set}
			\[
				X(\Adeles_k)^{\et, \Br} := \bigcap_{\substack{f\colon Y\to X\\ 
				\textup{ torsor under }\\ \textup{ finite \'etale }G}} 
				\left(\bigcup_{\tau\in \HH^1(k,G)} 
				f^{\tau}\left(Y^{\tau}(\Adeles_k)^{\Br}\right)\right).
			\]
			By construction, we have $X(k) \subseteq X(\Adeles_k)^{\et, \Br}$; 
			we say there is an \defi{\'etale-Brauer obstruction} to the Hasse 
			principle if $X(\Adeles_k) \neq \emptyset$ and $X(\Adeles_k)^{\et, 
			\Br} = \emptyset$.  Note that $X(\Adeles_k)^{\et, \Br} \subseteq 
			X(\Adeles_k)^{\Br}$, so the \'etale-Brauer obstruction is at least 
			as strong as the Brauer-Manin obstruction.

			%%%%%%%%%%%%%%%%%%%%%%%%%%%%%%%%%%%%%%%%%%%%%%%%%%%%%%%%%%%%%%%%%%%%
			\subsubsection{K3 double covers as torsors}\label{ss:torsors}%%%%%%%
			%%%%%%%%%%%%%%%%%%%%%%%%%%%%%%%%%%%%%%%%%%%%%%%%%%%%%%%%%%%%%%%%%%%%

				Any K3 double cover $f\colon Y \to X$ of an Enriques surface $X$ 
				is a $\Z/2\Z$-torsor over $X$. By Kummer theory, we have 
				$\HH^1(k,\Z/2\Z) = k^\times/(k^\times)^2$, so a class $\tau$ may 
				be represented by an element $d \in k^\times$, up to squares.  
				If $Y$ has the form~\eqref{eq: general double cover}, then the 
				twisted torsor $Y^\tau$ is given explicitly as
				\begin{equation}
					\label{eq: twisted double cover}
					V\big(dQ_i(s,t,u) - \Qtilde_i(x,y,z):\quad i = 1,2,3 \big).
				\end{equation}
				Over $\Q$, a class $\tau$ is represented uniquely by a 
				squarefree integer $d$; we write $\Yabc^{(d)}$ instead of 
				$\Yabc^{\tau}$.

	%%%%%%%%%%%%%%%%%%%%%%%%%%%%%%%%%%%%%%%%%%%%%%%%%%%%%%%%%%%%%%%%%%%%%%%%%%%%
	\section{Absence of $\Q$-points}\label{sec:noQpoints}%%%%%%%%%%%%%%%%%%%%%%%
	%%%%%%%%%%%%%%%%%%%%%%%%%%%%%%%%%%%%%%%%%%%%%%%%%%%%%%%%%%%%%%%%%%%%%%%%%%%%	

		\begin{lemma}\label{lem:no-Qp-points}
			Let $\mathbf{a}\in \Z^3_{>0}$ satisfy conditions $(1)$ and $(2)$. If 
			$d$ is a squarefree integer divisible by a prime $p$ different from 
			$2$ and $5$, then $\Yabc^{(d)}(\Z/p^2\Z) = \emptyset$.
		\end{lemma}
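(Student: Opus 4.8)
The plan is to reduce the problem modulo $p$ first, exploit the factorization of the second twisted quadric, and then run a lifting argument that genuinely requires the modulus $p^2$. Since $p\mid d$ and $d$ is squarefree, write $d = pm$ with $\gcd(m,p)=1$, so that $m$ is a unit in $\Z/p^2\Z$. Because $\Z/p^2\Z$ is local, a point of $\Yabc^{(d)}(\Z/p^2\Z)$ is represented by coordinates $(s:t:u:x:y:z)$ at least one of which is a unit, i.e.\ not divisible by $p$; I will derive a contradiction with this primitivity. By the description of twisted torsors, $\Yabc^{(d)}$ is cut out by
\begin{align*}
d s^2 &= xy + 5z^2,\\
d(s^2 - 5t^2) &= (x+y)(x+2y),\\
d u^2 &= ax^2 + by^2 + cz^2.
\end{align*}
As $p\mid d$, reducing modulo $p$ kills every left-hand side and leaves three conditions on $x,y,z$ alone:
\begin{gather*}
xy + 5z^2 \equiv 0,\qquad (x+y)(x+2y)\equiv 0,\qquad ax^2+by^2+cz^2\equiv 0 \pmod p.
\end{gather*}

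Next I would split on the factorization of the middle relation, which forces $x\equiv -y$ or $x\equiv -2y \pmod p$. In the first case the first relation gives $y^2\equiv 5z^2$, and substituting into the third yields $(5a+5b+c)z^2\equiv 0 \pmod p$; if $z\not\equiv 0$ then $p\mid(5a+5b+c)$ while $5\equiv (y/z)^2$ is a square modulo $p$, contradicting condition $(1)$. In the second case one similarly gets $2y^2\equiv 5z^2$, and the third relation becomes $(20a+5b+2c)z^2\equiv 0 \pmod p$; if $z\not\equiv 0$ then $p\mid(20a+5b+2c)$ while $10\equiv (2y/z)^2$ is a square modulo $p$, contradicting condition $(2)$. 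Since $p\neq 2,5$, the only surviving possibility in either case is $z\equiv 0$, which then forces $y\equiv 0$ and hence $x\equiv 0$. Thus every admissible reduction satisfies $x\equiv y\equiv z\equiv 0 \pmod p$.

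The decisive step is the lift to modulus $p^2$. Once $x,y,z\in p\Z/p^2\Z$, every quadratic monomial in $x,y,z$ lies in $p^2\Z/p^2\Z = 0$, so modulo $p^2$ the three defining equations collapse to $ds^2\equiv 0$, $d(s^2-5t^2)\equiv 0$, and $du^2\equiv 0$. Writing $d=pm$ with $m$ a unit, these say $s^2\equiv 0$, $s^2-5t^2\equiv 0$, and $u^2\equiv 0 \pmod p$; hence $s\equiv u\equiv 0$, and then (using $p\neq 5$) $t\equiv 0 \pmod p$. Therefore all six coordinates are divisible by $p$, contradicting the primitivity of a $\Z/p^2\Z$-point, and so $\Yabc^{(d)}(\Z/p^2\Z)=\emptyset$.

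The only genuinely delicate point is this final lifting step, and it is exactly what makes the statement about $\Z/p^2\Z$ rather than $\F_p$: over $\F_p$ there are plenty of solutions, for instance $x=y=z=0$ with $s,t,u$ arbitrary, so no obstruction is visible at the first level. The obstruction lives precisely at the second infinitesimal level, where the vanishing of the $x,y,z$-quadrics modulo $p^2$ propagates the divisibility to $s,t,u$. The main care needed is bookkeeping of which coordinates are units throughout the reduction.
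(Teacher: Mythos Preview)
Your proof is correct and follows essentially the same approach as the paper: reduce modulo $p$, use the factorization $(x+y)(x+2y)\equiv 0$ to split into two cases, invoke conditions $(1)$ and $(2)$ to force $x\equiv y\equiv z\equiv 0\pmod p$, and then lift to $\Z/p^2\Z$ to conclude that $s,t,u$ are also divisible by $p$, contradicting primitivity. Your write-up is in fact more explicit than the paper's about the lifting step and about why the obstruction genuinely lives at level $p^2$ rather than $p$.
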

		\begin{proof}
			Let $(s:t:u:x:y:z)$ be a primitive $\Z/p^2\Z$-point of 
			$\Yabc^{(d)}$.  The equations of $\Yabc^{(d)}$ imply that
			\begin{equation}
				\label{eq: LHS eq 0 mod p}
				xy + 5z^2,\quad
				(x + y)(x + 2y), \quad\textup{and}\quad
				ax^2 + by^2 + cz^2
			\end{equation}
			are all congruent to $0$ modulo $p$.  Let us first assume that $x 
			\equiv -y \pmod p$.  Substituting this congruence into the first and 
			third quadrics of~\eqref{eq: LHS eq 0 mod p} we obtain
			\begin{align*}
				y^2 &\equiv 5z^2 \pmod p,\\
				(a + b)y^2 &\equiv -cz^2 \pmod p.
			\end{align*}
			These congruences are either linearly independent, in which case 
			$z\equiv y \equiv x \equiv 0 \pmod p$, or else $p\ |\ (5a + 5b +c)$.  
			In the latter case, by condition $(1)$ we know that $5$ is not a 
			square modulo any prime $p$ dividing $5a + 5b + c$, so the 
			congruences again imply that $z\equiv y \equiv x \equiv 0 \pmod p$.  
			The defining equations of $\Yabc^{(d)}$ then imply that $s\equiv t 
			\equiv u \equiv 0 \pmod p$, a contradiction.  The argument for the 
			case $x \equiv -2y \pmod p$ is similar.
		\end{proof}

		\begin{proposition}\label{prop:noQpoints}
			Let $\mathbf{a} \in \Z^3_{>0}$ satisfy conditions $(1)$--$(4)$.  
			Then 
			\[
				\Xabc(\Adeles_{\Q})^{\et, \Br} = \emptyset.
			\]
		\end{proposition}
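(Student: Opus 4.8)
The plan is to exploit the single torsor given by the K3 double cover $f\colon \Yabc\to\Xabc$, which is a $\Z/2\Z$-torsor by \S\ref{ss:torsors}. Since $\Xabc(\Adeles_{\Q})^{\et,\Br}$ is an intersection over \emph{all} torsors under finite \'etale group schemes, it is contained in the contribution of this one torsor,
\[
	\Xabc(\Adeles_{\Q})^{\et,\Br}\subseteq\bigcup_{d}f^{(d)}\big(\Yabc^{(d)}(\Adeles_{\Q})^{\Br}\big),
\]
where $d$ ranges over squarefree integers representing $\HH^1(\Q,\Z/2\Z)=\Q^\times/(\Q^\times)^2$ and $\Yabc^{(d)}$ is the twisted cover of \eqref{eq: twisted double cover}. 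Thus it suffices to show $\Yabc^{(d)}(\Adeles_{\Q})^{\Br}=\emptyset$ for every squarefree $d$.

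First I would dispose of all but finitely many twists. If $d$ is divisible by a prime $p\notin\{2,5\}$, then Lemma~\ref{lem:no-Qp-points} gives $\Yabc^{(d)}(\Z/p^2\Z)=\emptyset$. As $\Yabc^{(d)}$ is projective, any $\Q_p$-point has a primitive $\Z_p$-representative whose reduction modulo $p^2$ lies in $\Yabc^{(d)}(\Z/p^2\Z)$; hence $\Yabc^{(d)}(\Q_p)=\emptyset$ and a fortiori $\Yabc^{(d)}(\Adeles_{\Q})^{\Br}=\emptyset$. This leaves only $d\in\{\pm1,\pm2,\pm5,\pm10\}$. Next I would invoke condition $(3)$ at $p=3$. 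The third defining equation of $\Yabc^{(d)}$ is $ax^2+by^2+cz^2-du^2=0$, and since the non-square units of $\Q_3^\times$ are represented by $-1$, a short check shows $-d$ is a square in $\Q_3^\times$ exactly for $d\in\{-1,2,5,-10\}$. For those $d$ the form $ax^2+by^2+cz^2-du^2$ is $\Q_3$-equivalent to $ax^2+by^2+cz^2+u^2$, which is anisotropic by condition $(3)$; its only $\Q_3$-zero is the origin, forcing $x=y=z=u=0$ and then $s=t=0$, so $\Yabc^{(d)}(\Q_3)=\emptyset$.

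The hard part is the remaining four twists $d\in\{1,-2,-5,10\}$, which include the untwisted cover $\Yabc$ and so need not lack adelic points; for these I would produce a Brauer--Manin obstruction of Birch--Swinnerton-Dyer type. The first two equations may be rewritten as
\[
	xy=s^2-5z^2=\N_{\Q(\sqrt{5})/\Q}(s+z\sqrt{5}),\qquad (x+y)(x+2y)=s^2-5t^2=\N_{\Q(\sqrt{5})/\Q}(s+t\sqrt{5}),
\]
so on the quartic del Pezzo surface cut out by these two quadrics the quaternion symbol $\calA=(5,\ell)$, for a suitable linear form $\ell\in\{x,\,y,\,x+y,\,x+2y\}$, defines a class in $\Br/\Br_0$; pulling it back along the projection $\Yabc^{(d)}\to\PP^4_{s,t,x,y,z}$ gives a class on $\Yabc^{(d)}$. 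The goal is to prove $\sum_v\inv_v\calA(P_v)=\tfrac12$ for every adelic point $(P_v)$ (if $\Yabc^{(d)}$ has no adelic points the claim is immediate). Away from $p=5$ the symbol is unramified and the local invariants vanish, so the computation concentrates entirely at $p=5$. There I would use the third quadric $ax^2+by^2+cz^2=du^2$ to pin down the square class of $\ell(P_5)$ modulo $5$: condition $(4)$, that $-bc$ is not a square modulo $5$, should force $\ell(P_5)$ into the non-square class, giving $\inv_5\calA(P_5)=\tfrac12$ uniformly in the point and hence the obstruction $\Yabc^{(d)}(\Adeles_{\Q})^{\Br}=\emptyset$. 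The main obstacle is precisely this $p=5$ analysis: one must verify both that the pulled-back class is genuinely unramified at every place other than $5$ and that the third quadric, combined with condition $(4)$, controls $\ell$ modulo $5$ uniformly over all $\Q_5$-points.
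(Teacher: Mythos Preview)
Your reduction to the eight twists $d\in\{\pm1,\pm2,\pm5,\pm10\}$ via Lemma~\ref{lem:no-Qp-points}, and then to $d\in\{1,-2,-5,10\}$ via condition~(3) at $p=3$, is correct and agrees with the paper. The divergence is in the last step, where your sketch has a genuine gap.

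The paper does not run a Brauer--Manin argument for four twists. It first observes that $a,b,c>0$ forces $\Yabc^{(d)}(\R)=\emptyset$ for every negative $d$, disposing of $d=-2,-5$ for free; then it uses condition~(4) to show directly that $\Yabc^{(10)}(\Q_5)=\emptyset$ (a local-points argument, no Brauer class involved). Only $d=1$ is left, and there the paper simply quotes Birch and Swinnerton-Dyer: the del Pezzo surface $S=\{xy=s^2-5z^2,\ (x+y)(x+2y)=s^2-5t^2\}$ already satisfies $S(\Adeles_\Q)^{\calA}=\emptyset$ for $\calA=\big(5,(x+y)/x\big)$, and one pulls this back along $h\colon\Yabc\to S$. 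Condition~(4) plays no role whatsoever in the $d=1$ case.

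Your attempt to handle all four remaining twists by a single Brauer--Manin computation breaks in two concrete places. First, the two quadrics you display are those of $\Yabc^{(1)}$ only; for $\Yabc^{(d)}$ they read $xy+5z^2=ds^2$ and $(x+y)(x+2y)=d(s^2-5t^2)$, so forgetting $u$ lands on a \emph{different} quartic del Pezzo surface. For $d=10$ one has $(5,(x+y)(x+2y))=(5,10)=(5,2)\neq0$ in $\Br\Q$, so $(5,\ell)$ is not even a class in $\Br/\Br_0$ of that surface without modification, and your claim that ``away from $p=5$ the local invariants vanish'' is unjustified. Second, you have misassigned condition~(4): it is not an ingredient in the $p=5$ invariant computation on $S$ (where $a,b,c$ do not appear at all), but rather the device by which the paper kills $\Yabc^{(10)}(\Q_5)$ outright. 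The ``mechanism'' you propose---using the third quadric and condition~(4) to pin down the square class of $\ell(P_5)$---is not how BSD's obstruction works and is not what condition~(4) is for.
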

		\begin{proof}
			By definition of the \'etale-Brauer set, it suffices to show that
			\begin{equation}
				\label{eq: no local pts on twists}
				\Yabc^{(d)}(\Adeles_{\Q})^{\Br} = \emptyset
			\end{equation}
			for all squarefree integers $d$ (including $1$).  
			Lemma~\ref{lem:no-Qp-points} establishes~\eqref{eq: no local pts on 
			twists} for all such $d$ except those in $\langle -1, 2, 5 \rangle$.  
			Since $a,b$ and $c$ are positive, it is easy to see that 
			$\Yabc^{(d)}(\R) =\emptyset$ for any negative $d$.  Condition $(3)$ 
			implies that $\Yabc^{(d)}(\Q_3) = \emptyset$ for all $d \equiv 
			2\pmod 3$ and condition $(4)$ implies that $Y^{(10)}(\Q_5) = 
			\emptyset$.  Thus, the only remaining case is $d = 1$.  Birch and 
			Swinnerton-Dyer prove that the del Pezzo surface $S \subset \PP^4$ 
			of degree $4$ given by 
			\begin{align}
				\label{eq:dp4-1}
				xy &= s^2 - 5z^2, \\
				(x + y)(x + 2y) &= s^2 - 5t^2,\label{eq:dp4-2}
			\end{align}
			has no adelic points orthogonal to the quaternion Azumaya algebra 
			\[
				\calA := \left(5, \frac{x+y}{x}\right) \in 
				\im\left(\Br S \to \Br\kk(S)\right);
			\]
			see~\cite{BSD-hasse}.  The existence of a map $h\colon\Yabc \to S$ 
			and functoriality of the evaluation pairing imply that 
			$\Yabc(\Adeles_{\Q})^{h^*(\calA)} = \emptyset$.
		\end{proof}

	%%%%%%%%%%%%%%%%%%%%%%%%%%%%%%%%%%%%%%%%%%%%%%%%%%%%%%%%%%%%%%%%%%%%%%%%%%%%
	\section{Picard Groups}\label{sec:Picard}%%%%%%%%%%%%%%%%%%%%%%%%%%%%%%%%%%%
	%%%%%%%%%%%%%%%%%%%%%%%%%%%%%%%%%%%%%%%%%%%%%%%%%%%%%%%%%%%%%%%%%%%%%%%%%%%%

		We compute explicit presentations for the groups $\Pic\Xabcbar$, 
		$\Num\Xabcbar$, and $\Pic\Yabcbar$, and thus compute their Galois 
		cohomology.  In \S\ref{subsec:fibrations}, we describe how to obtain 
		genus $1$ fibrations on $\Yabcbar$ and $\Xabcbar$.  In 
		\S\ref{subsec:rank}, we prove that the fibers of these fibrations give 
		us a finite index subgroup of $\Pic\Yabcbar$, and in 
		\S\ref{subsec:saturation}, we explain how to compute its saturation in 
		$\Pic\Yabcbar$.  In \S\ref{subsec:Enriques}, we give explicit generators 
		for $\Num\Xabcbar$ in terms of the fibrations. Finally, in 
		\S\ref{subsec:cohom}, we use the explicit computations from the previous 
		sections to determine the low degree Galois cohomology of $\Pic 
		\Yabcbar, \Pic \Xabcbar,$ and $\Num \Xabcbar$.

		%%%%%%%%%%%%%%%%%%%%%%%%%%%%%%%%%%%%%%%%%%%%%%%%%%%%%%%%%%%%%%%%%%%%%%%%
		\subsection{Genus $1$ fibrations}\label{subsec:fibrations}%%%%%%%%%%%%%%
		%%%%%%%%%%%%%%%%%%%%%%%%%%%%%%%%%%%%%%%%%%%%%%%%%%%%%%%%%%%%%%%%%%%%%%%%
			Throughout this subsection, we work over an algebraically closed 
			field. 

			Let $Y$ be a $K3$ surface of degree $8$, given as a closed subscheme 
			of $\PP^5$ by the vanishing of a net of quadrics.  Let $Z$ be the 
			sextic curve in $\PP^2$ that parametrizes the degeneracy locus of 
			this net.  Expanding on~\cite[Example IX.4.5]{Beauville-surfaces}, 
			we explain how an isolated singular point $P \in Z$ gives rise to 
			two distinct genus $1$ fibrations on $Y$.

			Let $Q$ be the quadric corresponding to $P$ in the net defining $Y$.  
			Then $Q$ has rank $4$, and there are two rulings on it, each 
			realizing $V(Q)$ as a $\PP^3$-bundle over $\PP^1$. Restricting to 
			$Y$, we obtain two maps $\phi_P, \phi'_P\colon Y \to \PP^1_\Qbar$ 
			whose respective general fibers are smooth complete intersections of 
			two quadrics in $\PP^3$, i.e., smooth genus $1$ curves. We write 
			$F_P$ (resp.\ $G_P$) for the class in $\Pic Y$ of a fiber in 
			$\phi_P$ (resp.\ $\phi'_P$).  

			For the family of K3 surfaces given by $\Yabcbar$, the curve $Z 
			\subset \PP^2$ is the union of $4$ lines, each defined over $\Q$, 
			and a conic.  If $2a \neq b$, then the conic is geometrically 
			irreducible and there are exactly $14$ distinct singular points 
			$P_1, \ldots, P_{14}$ on $Z$. These points give rise to the $28$ 
			classes $F_i := F_{P_i}, G_i := G_{P_i}, i = 1, \ldots, 14$ in 
			$\Pic\Yabcbar$, as above. The class $F_i + G_i$ is equivalent to a 
			hyperplane section for all $i$, and we have
			\[
				F_i^2 = G_i^2 = 0, \quad F_i\cdot G_i = 4, \quad 
				F_i \cdot G_j = F_i \cdot F_j = G_i \cdot 
				G_j = 2 \quad
				\textup{ for all } i\neq j.
			\]
			These relations imply that $G_1, F_1, F_2, \ldots, F_{14}$ generate 
			a rank 15 sublattice of $\Pic\Yabcbar$.  We have listed equations 
			for representatives of these classes and the Galois action of 
			$G_{\Q}$ on them in Appendix~\ref{app:splittingfield}. We will use 
			this information in later sections.

			%%%%%%%%%%%%%%%%%%%%%%%%%%%%%%%%%%%%%%%%%%%%%%%%%%%%%%%%%%%%%%%%%%%%
			\subsubsection{Fibrations that descend to $\Xabcbar$}%%%%%%%%%%%%%%%
			%%%%%%%%%%%%%%%%%%%%%%%%%%%%%%%%%%%%%%%%%%%%%%%%%%%%%%%%%%%%%%%%%%%%	
				We work in characteristic zero for the remainder of this 
				section. Let $f\colon Y \to X$ be the K3 double-cover of a 
				generic Enriques surface $X$; assume that $Y$ is of the 
				form~\eqref{eq: general double cover}. In this case, the 
				degeneracy locus $Z$ contains at least one singular point $P$; 
				let $\phi_P, \phi'_P\colon Y\to \PP^1$ be the fibrations 
				described above. If there is an involution $\iota\colon\PP^1 \to 
				\PP^1$ such that $\phi\circ\sigma = \iota\circ\phi$, then each 
				fibration descends to $X$, i.e., there is a genus $1$ fibration 
				$\phi_X\colon X \to \PP^1$ such that the diagram
				\[
				 	\minCDarrowwidth55pt
					\begin{CD}
						Y @>>f> Y/\sigma = X \\
						@VV\phi_P V\phantom{Y/\sigma=} @VV\phi_X V\\
						\PP^1 @>>> \PP^1/\iota = \PP^1
					\end{CD}
				\]
				commutes, and similarly for $\phi_P'$.

				Careful inspection of the singular locus of $Z$ allows us to 
				determine which points $P$ have an associated involution $\iota$ 
				as above. Indeed, since the quadrics defining $Y$ are 
				differences of a quadric in $k[x,y,z]$ and a quadric in 
				$k[s,t,u]$, the sextic curve defining $Z \subset \PP^2$ is the 
				union of two (possibly reducible) cubic curves. Generically, 
				these two cubics intersect in nine distinct points. It is 
				precisely these singular points of $Z$ that have an associated 
				involution $\iota$ such that $\phi\circ\sigma = \iota\circ\phi$.

				The map $\PP^1 \to  \PP^1/\iota$ is ramified above the fixed 
				points of $\iota$.  Since  $f\colon Y\to X$ is unramified, the 
				morphism $\phi_X\colon X\to \PP^1$ must have non-reduced	
				fibers above the fixed points of $\iota$.  We denote by $C_P$ 
				and $\Ctilde_P$ (resp.\ $D_P$ and $\Dtilde_P$) the reduced 
				subschemes of the nonreduced fibers $F_P$ (resp.\ $G_P$) of 
				$\phi_P$.

				Let us specialize to our particular Enriques surface $\Xabcbar$ 
				and its K3 double cover $\Yabcbar$.  We already know that $Z$ 
				contains $14$ singular points $P_1, \ldots, P_{14}$.  We may 
				renumber these points so that $P_1, \ldots, P_9$ correspond to 
				the fibrations that descend to $\Xabcbar$.  This gives us $36$ 
				curves $C_i, \Ctilde_i, D_i, \Dtilde_i, i = 1,\ldots, 9$ on 
				$\Xabcbar$.  After possibly interchanging $D_i$ and $\Dtilde_i$ 
				for some $i$, we have the linear equivalence relations
				\[
					C_i + D_i = \Ctilde_j + \Dtilde_j, 
					\quad 2(C_i - \Ctilde_i) = 2(D_i - \Dtilde_i) = 0,
					\quad f^*C_i = f^*\Ctilde_i = F_i, 
					\quad f^*D_i = f^*\Dtilde_i = G_i, 
				\]
				for all $i,j$. Combining the projection formula with the 
				intersection numbers on $\Yabcbar$, we obtain
				\[
					C_i^2 = D_i^2 = 0, \quad 
					C_i\cdot D_j = C_i \cdot C_j = D_i \cdot D_j = 1, \quad
					C_i\cdot D_i = 2, \quad \textup{ for all }i\neq j.
				\]
				We have listed the action of the Galois group $G_{\Q}$ on $C_i, 
				D_i$ in Appendix~\ref{app:splittingfield}.

			%%%%%%%%%%%%%%%%%%%%%%%%%%%%%%%%%%%%%%%%%%%%%%%%%%%%%%%%%%%%%%%%%%%%
			\subsubsection{Splitting field of the genus $1$ fibrations}%%%%%%%%%
			\label{subsec:galoisgeneral}%%%%%%%%%%%%%%%%%%%%%%%%%%%%%%%%%%%%%%%%
			%%%%%%%%%%%%%%%%%%%%%%%%%%%%%%%%%%%%%%%%%%%%%%%%%%%%%%%%%%%%%%%%%%%%
				Let $a$, $b$ and $c$ be indeterminates and $\mathbf{a} = 
				(a,b,c)$; consider $\Yabc$ and $\Xabc$ as surfaces over 
				$\Q(a,b,c)$.  The splitting field $K$ of the fibers of all the 
				genus $1$ fibrations is a degree $2^{18}$ extension (explicit 
				generators can be found in Appendix~\ref{app:splittingfield}).  
				We can consider $K$ as a $2^{18}$-cover of $\Aff^3$.  We say 
				that the triplet $\mathbf{a_0} := (a_0,b_0,c_0) \in \Z^3_{>0}$ 
				is \defi{Galois general} if the special fiber 
				$K_{(a_0,b_0,c_0)}$ is a field, i.e., if the splitting field of 
				the fibers of the genus $1$ fibrations of $Y_{\mathbf{a_0}}$ is 
				a degree $2^{18}$ extension over $\Q$. 

		%%%%%%%%%%%%%%%%%%%%%%%%%%%%%%%%%%%%%%%%%%%%%%%%%%%%%%%%%%%%%%%%%%%%%%%%
		\subsection{Upper bounds for $\rho(\Yabc)$ (after van Luijk)}%%%%%%%%%%%
		\label{subsec:rank}%%%%%%%%%%%%%%%%%%%%%%%%%%%%%%%%%%%%%%%%%%%%%%%%%%%%%
		%%%%%%%%%%%%%%%%%%%%%%%%%%%%%%%%%%%%%%%%%%%%%%%%%%%%%%%%%%%%%%%%%%%%%%%%

			Let $p \in \Z$ be a prime of good reduction for $\Yabc$, and write 
			$\Yabcp$ for the mod~$p$ reduction of $\Yabc$.  Suppose that $\Yabc$ 
			has good reduction at two distinct primes $p_1$ and $p_2$, that 
			$\rho(\Yabcpi) \leq n$ for each $i$, and that the discriminants of 
			the Picard groups of the reductions lie in different square classes. 
			Then $\rho(\Yabc) \leq n-1$ (see~\cite{vanLuijk-pic1}*{Proof of 
			Theorem~3.1}).  

			Let $\ell\neq p$ be a prime, and write $\psi_p(T)$ for the 
			characteristic polynomial of the action of Frobenius on 
			$\HH^2_{\et}\big(\Yabcp,\Q_{\ell}\big)$. Then $\rho(\Yabcp)$ is 
			bounded above by the number of roots of $\psi_p$ (counted with 
			multiplicity) that are of the form $p\zeta$, where $\zeta$ is a root 
			of unity~\cite[Corollary 2.3]{vanLuijk-pic1}. Using the Lefschetz 
			trace formula and Newton's identities, we may compute the 
			coefficient of $T^i$ in $\psi_p(T)$ in terms of $\#\Yabcp(\F_{p}), 
			\ldots, \#\Yabcp(\F_{p^{22-i}})$.  The functional equation
			\begin{equation}
				\label{eq: functional equation}
				p^{22}\psi_p(T) = \pm T^{22}\psi_p(p^2/T),
			\end{equation}
			then allows us to compute the coefficient of $T^{23-i}$, up to sign.  
			In addition, for a subgroup $M \subseteq \Pic \Yabcpbar$, the 
			polynomial $\psi_M(T/p)$ divides $\psi_p(T)$, where $\psi_M(T)$ is 
			the characteristic polynomial of Frobenius acting on $M$.  Thus, 
			knowing the action of Frobenius on a rank $r$ subgroup of $\Pic 
			\Yabcpbar$, together with $\#\Yabcp(\F_{p}), \ldots, 
			\#\Yabcp(\F_{p^{\lceil(22-r)/2\rceil}})$, allows us to compute up to 
			two possible $\psi_p(T)$'s, each corresponding to a choice of sign 
			in~\eqref{eq: functional equation}.  In some cases, this is enough 
			information to rule out one of the sign choices; for more ways to 
			determine the sign choice, see~\cite{EJ-signs}.

			\begin{proposition}
				\label{prop: Pic upper bound}
				Let $\mathbf{a} \in \Z^3_{>0}$ satisfy conditions $(5)$ and 
				$(6)$. Then $\rho(\Yabc) \leq 15$.
			\end{proposition}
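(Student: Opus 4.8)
The plan is to apply the criterion recalled just before the statement with $n = 16$, using the two primes $p_1 = 7$ and $p_2 = 11$ singled out by conditions $(5)$ and $(6)$. The first observation is that these conditions make the reductions essentially rigid: since $(a,b,c) \equiv (5,6,6) \pmod 7$ and $(a,b,c) \equiv (1,1,2) \pmod{11}$, the reduced surfaces $\Yabcseven$ and $\Yabceleven$ depend only on the fixed residues, not on the particular triplet $\mathbf a$. One first checks that the product appearing in the non-degeneracy hypothesis does not vanish modulo $7$ or $11$ for these residues, so that $\Yabc$ has good reduction at both primes and $\Yabcseven$, $\Yabceleven$ are smooth K3 surfaces. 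Thus the entire argument reduces to a finite computation on two explicit K3 surfaces, one over $\F_7$ and one over $\F_{11}$.

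For each prime $p \in \{7, 11\}$ I would then compute the characteristic polynomial $\psi_p(T)$ of Frobenius on $\HH^2_{\et}(\Yabcpbar, \Q_\ell)$ by the recipe described above: count $\#\Yabcp(\F_{p^i})$ for $i = 1, \dots, 4$ (the bound $\lceil (22 - 15)/2 \rceil = 4$ coming from the rank-$15$ sublattice $\langle G_1, F_1, \dots, F_{14}\rangle$ whose Frobenius action is recorded in Appendix~\ref{app:splittingfield}), feed these into the Lefschetz trace formula and Newton's identities to obtain the low-order coefficients of $\psi_p$, and use the functional equation~\eqref{eq: functional equation} to recover the remaining coefficients up to the overall sign. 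The known action of Frobenius on the rank-$15$ sublattice pins down a factor of $\psi_p$ of the form $\psi_M(T/p)$ and, together with the point counts, leaves at most the two sign choices; I would rule out the wrong sign either by an integrality or root-of-unity consistency check, or by the methods of~\cite{EJ-signs}. Counting the roots of the resulting $\psi_p(T)$ that have the form $p\zeta$ with $\zeta$ a root of unity then bounds $\rho(\Yabcp)$ from above, and the goal is to see that this bound equals $16$ at each prime.

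It remains to control the square class of the Picard discriminant at each reduction so as to squeeze the bound from $16$ down to $15$. Having $\psi_p$ in hand, I would read off the square class of $\operatorname{disc} \Pic \Yabcpbar$ directly from the non-$(p\zeta)$ part of $\psi_p$ via the Artin--Tate formula --- legitimate here because the Brauer group of a K3 surface has square order and so contributes trivially modulo squares --- or, equivalently, exhibit one extra divisor class beyond the rank-$15$ sublattice and compute the discriminant of the resulting rank-$16$ lattice, whose square class is insensitive to finite index and hence agrees with that of $\operatorname{disc} \Pic \Yabcpbar$. The crux is then to verify that the two square classes so obtained, at $p = 7$ and at $p = 11$, are genuinely different in $\Q^\times / (\Q^\times)^2$. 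Granting this, the criterion applies with $n = 16$: were $\rho(\Yabc)$ equal to $16$, the specialization maps $\Pic \Yabcbar \hookrightarrow \Pic \Yabcsevenbar$ and $\Pic \Yabcbar \hookrightarrow \Pic \Yabcelevenbar$ would both be of finite index, forcing the two reduction discriminants into a common square class --- a contradiction. Hence $\rho(\Yabc) \leq 15$.

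I expect the main obstacle to be exactly this last coincidence: that the two reductions, which each jump to Picard rank $16$, nevertheless carry Picard lattices whose discriminants lie in distinct square classes. Nothing forces this a priori, and it is precisely the role of the specific congruence conditions $(5)$ and $(6)$ to engineer residues at which the point counts over $\F_{p^4}$ --- the heaviest part of the computation --- yield exactly rank $16$ with discriminants in opposite square classes. Verifying smoothness and carrying out the $\F_{11^4}$ count are the practical bottlenecks, but the conceptual difficulty is confined to this discriminant comparison.
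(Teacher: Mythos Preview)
Your proposal is correct and follows essentially the same approach as the paper: reduce at $p=7$ and $p=11$, combine the Frobenius action on the rank-$15$ sublattice $\langle G_1,F_1,\dots,F_{14}\rangle$ with point counts through $\F_{p^4}$ to pin down $\psi_p(T)$, see that each reduction has $\rho\leq 16$, and then use the Artin--Tate formula (with $\#\Br$ a square) to show the discriminant square classes differ. The paper carries this out explicitly, obtaining $[|\Delta|]=3$ at $p=7$ and $[|\Delta|]=2$ at $p=11$.
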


			\begin{proof}
				Let $M_p$ denote the subgroup of $\Pic \Yabcpbar$ generated by 
				$G_1,F_1,\dots,F_{14}$.  Let $\widetilde{\psi}_{p}(T) := 
				p^{-22}{\psi_p}(pT)$, so that the number of roots of 
				$\widetilde{\psi}_{p}(T)$ that are roots of unity gives an upper 
				bound for the geometric Picard number of the reduced surface. As 
				described above, computing the action of Frobenius on $M_p$ and 
				computing $\#\Yabcp(\F_{p^{i}})$ for $i := 1, 2,3,4$ is enough 
				to determine $\tilde{\psi}_{p}(T)$ for $p=7$ and $11$: 
				\begin{align*}
					\widetilde{\psi}_{7}(T) &= \frac{1}{7}(T - 1)^8(T + 1)^8
					(7T^6 + 6T^5 + 9T^4 + 4T^3 + 9T^2 + 6T + 7), 
					\\
					\widetilde{\psi}_{11}(T) &= 
					\frac1{11}(T - 1)^8(T + 1)^4(T^2 + 1)^2
					(11T^6 - 2T^5 + T^4 + 12T^3 + T^2 - 2T + 11).
				\end{align*}
				In both cases, the roots of the degree $6$ factor of 
				$\widetilde{\psi}_{p}(T)$ are not integral, so they are not 
				roots of unity.  We conclude that $\rho(\Yabcp) \leq 16$ for 
				$p = 7$ and $11$.

			%%%%%%%%% Output from code	%%%%%%%%%%%%%%%%%%%%%%%%%%%%%%%%%%%%%%%%
			% p = 7, a = 5, b = 6, c = 6
			% [ 7, 8, 0 ]
			% [
			%     <x - 1, 1>,
			%     <x^6 + 6/7*x^5 + 9/7*x^4 + 4/7*x^3 + 9/7*x^2 + 6/7*x + 1, 1>
			% ]
			% The discriminant of the Picard lattice, modulo some square and 
			% up to sign, is
			% [ <2, 12>, <3, 1> ]
			% p = 11, a = 1, b = 1, c = 2
			% [ 7, 4, 2 ]
			% [
			%     <x - 1, 1>,
			%     <x^6 - 2/11*x^5 + 1/11*x^4 + 12/11*x^3 + 1/11*x^2 - 2/11*x 
			% 		+ 1, 1>
			% ]
			% The discriminant of the Picard lattice, modulo some square and 
			% up to sign, is
			% [ <2, 11> ]
			%%%%%%%%%%%%%%%%%%%%%%%%%%%%%%%%%%%%%%%%%%%%%%%%%%%%%%%%%%%%%%%%%%%%

				Next, we compute the square class of the discriminant $\Delta$ 
				of each reduced Picard lattice via the Artin-Tate conjecture, 
				which is known to hold for K3 surfaces endowed with a genus $1$ 
				fibration (see~\cites{Artin-SwinnertonDyer,Milne-AT}):
				\[
					\lim_{T \to p} \frac{\psi_p(T)}{(T - p)^{\rk(\Pic\Yabcp)}} = 
					p^{21 - \rk(\Pic\Yabcp)}\cdot\#\Br(\Yabcp)\cdot|\Delta|
				\]
				Observe that $\#\Br(\Yabcp)$ is always a square~\cite{LLR}. 
				Write $\big[ |\Delta| \big]$ for the class of $|\Delta|$ in 
				$\Q^{\times}/\Q^{\times2}$. We compute
				\[
					\big[ |\Delta(\Yabcsevenbar)| \big] = 3
					\quad\text{and}\quad
					\big[ |\Delta(\Yabcelevenbar)| \big] = 2,
				\]
				and thus $\rho(\Yabc) \leq 15$, completing the proof.
			\end{proof}

			%%%%%%%%%%%%%%%%%%%%%%%%%%%%%%%%%%%%%%%%%%%%%%%%%%%%%%%%%%%%%%%%%%%%
			\subsection{Determining $\Pic\Yabcbar$}\label{subsec:saturation}%%
			%%%%%%%%%%%%%%%%%%%%%%%%%%%%%%%%%%%%%%%%%%%%%%%%%%%%%%%%%%%%%%%%%%%%
				The sublattice $L := \langle G_1,F_1,\dots,F_{14} \rangle$ of 
				$\Pic \Yabcbar$ has discriminant $2^{17}$.  In this subsection 
				we determine its saturation in $\Pic \Yabcbar$.

				Each line $\ell$ on the curve $Z$ corresponds to a pencil of 
				quadrics; the vanishing locus of this pencil $S_\ell$ defines a 
				del Pezzo surface of degree $4$, embedded in a hyperplane in 
				$\PP^5$.  The inclusion of the pencil in the net of quadrics 
				defining $\Yabcbar$ gives a morphism $\Yabcbar \to S_\ell$.  
				Recall that $Z$ contains four lines, so we obtain four such 
				maps.  Pulling back the exceptional curves on each of the del 
				Pezzo surfaces, we obtain $16$ order $2$ elements in $(\Pic 
				\Yabcbar)/L$, generated by 
				\begin{align}
					\frac{1}{2}(F_1 + F_2 + F_3 + F_{10} + F_{12}),\quad &
					\frac{1}{2}(F_1 + G_1 + F_4 + F_5 + F_6 + F_{10} + F_{11}),
					\label{eqn:lines1}\\
					\frac{1}{2}(F_1 + F_4 + F_7 + F_{13} + F_{14}), \quad &
					\frac{1}{2}(F_1 + G_1 + F_7 + F_8 + F_9 + F_{11} + F_{12}).
					\label{eqn:lines2}
				\end{align}

				We owe the idea behind the proof of the following proposition to 
				Michael Stoll and Damiano Testa~\cite{ST-rationalbox}.
				\begin{proposition}
					\label{prop:saturation}
					Let $\mathbf{a} \in \Z^3_{>0}$ satisfy condition $(8)$. The 
					sublattice $L' \subseteq \Pic\Yabcbar$ spanned by 
					$G_1,F_1,\dots,F_{14}$ and the classes in~\eqref{eqn:lines1} 
					and~\eqref{eqn:lines2} is saturated.
				\end{proposition}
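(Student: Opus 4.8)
The plan is to show that $L'$ is $\ell$-saturated in $M := \Pic\Yabcbar$ for every prime $\ell$, and for all but one $\ell$ this is automatic. The lattice $L = \langle G_1,F_1,\dots,F_{14}\rangle$ has $\operatorname{disc} L = 2^{17}$, and the four classes in~\eqref{eqn:lines1} and~\eqref{eqn:lines2} span a subgroup $L'/L \cong (\Z/2\Z)^{4}$ of order $2^{4}$, so that $\operatorname{disc} L' = 2^{17}/2^{8} = 2^{9}$. For an odd prime $\ell$ this discriminant is a unit in $\Z_\ell$, so $L' \otimes \Z_\ell$ is unimodular and hence an orthogonal summand of any integral overlattice; thus $L'$ is automatically $\ell$-saturated for $\ell$ odd, and only the prime $\ell = 2$ requires argument.

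For $\ell = 2$ I would recast the problem in terms of the discriminant form. Both $L'$ and $M$ are even lattices (every divisor class $D$ on a K3 surface has $D^{2}$ even) and both have rank $15$ by Proposition~\ref{prop: Pic upper bound}, so $A := M/L'$ is a finite abelian $2$-group that I must show is trivial. Write $D := L'^{*}/L'$ for the discriminant group, of order $2^{9}$, with its quadratic form $q\colon D \to \Q/2\Z$, $q(\bar x) = x^{2} \bmod 2\Z$. Since $L' \subseteq M \subseteq M^{*} \subseteq L'^{*}$, the group $A$ is a subgroup of $D$, and because $M$ is even, $q$ vanishes identically on $A$. The absolute Galois group $G_{\Q}$ acts on $M$ and preserves $L'$, so $A$ is a Galois submodule on which the action factors through $\Gamma := \Gal(K/\Q)$, where $K$ is the splitting field of \S\ref{subsec:galoisgeneral}. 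If $A \neq 0$, then its $2$-torsion $A[2]$ is a nonzero $\F_{2}[\Gamma]$-module; since $\Gamma$ is a $2$-group, the class equation (nontrivial orbits have even size) forces the fixed subspace $A[2]^{\Gamma}$ to be nonzero. Hence a genuine obstruction to saturation would produce a nonzero, Galois-fixed, $q$-isotropic class in $D[2]$.

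It therefore suffices to prove that every nonzero element of the Galois-fixed subspace $D[2]^{\Gamma}$ satisfies $q(\bar v) \neq 0$. This is exactly where condition~$(8)$ enters: Galois generality guarantees that $\Gamma$ is as large as possible, of order $2^{18}$, which makes $D[2]^{\Gamma}$ as small as possible. I would compute $D[2]$ and the form $q$ directly from the Gram matrix of $\langle G_1,F_1,\dots,F_{14}\rangle$ together with the relations~\eqref{eqn:lines1} and~\eqref{eqn:lines2}, and read off the $\Gamma$-action from the explicit permutation action of $G_{\Q}$ on the classes $F_i, G_i$ recorded in Appendix~\ref{app:splittingfield}. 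A finite check in \texttt{Magma} then verifies that the nonzero vectors of $D[2]^{\Gamma}$ are all anisotropic for $q$, contradicting the existence of a nonzero Galois-fixed isotropic class and forcing $A = 0$, i.e.\ $L' = M$.

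The main obstacle is this last, Galois-theoretic step: one must convert the hypothesis that $\mathbf{a}$ is Galois general into precise control of the fixed subspace $D[2]^{\Gamma}$ and confirm that $q$ is anisotropic there. Equivalently, the difficulty is showing that the maximal Galois action leaves no room for a Galois-stable even overlattice of $L'$. This is genuinely a place where the hypothesis is needed rather than a formality: for special (non-general) $\mathbf{a}$ the group $\Gamma$ shrinks, $D[2]^{\Gamma}$ grows, and isotropic fixed classes — corresponding to a strictly larger Picard lattice — can and do appear. By contrast, the discriminant bookkeeping and the enumeration of fixed classes are routine once the Gram matrix and the Galois action of Appendix~\ref{app:splittingfield} are in hand.
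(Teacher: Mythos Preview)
Your proposal is correct and follows essentially the same strategy as the paper: reduce to the prime $2$ via the discriminant $2^9$, invoke the fixed-point lemma for $2$-groups acting on $\F_2$-vector spaces, and rule out Galois-fixed candidates using the evenness of the K3 intersection form. The only cosmetic difference is that you phrase the final check in terms of the discriminant form on $D = L'^{*}/L'$, whereas the paper works with the map $L'/2L' \to \Pic\Yabcbar/2\Pic\Yabcbar$, explicitly identifies $(L'/2L')^{G}$ as the $2$-dimensional space spanned by $[G_1 + F_{10}]$ and $[F_2 + F_3 + F_{12}]$, and checks by hand that halving any nonzero fixed class would yield odd self-intersection.
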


				\begin{proof}
					The lattice $L'$ has discriminant $2^9$, so it suffices to 
					show that the induced map
					\[
						\phi\colon L'/2L' \to \Pic\Yabcbar/2\Pic\Yabcbar
					\]
					is injective.  The action of $G_{\Q}$ on $L'$ factors 
					through a finite group $G$ whose order divides $2^{18}$ (see 
					\S\ref{subsec:galoisgeneral}). Consider the induced 
					$G$-equivariant homomorphism
					\[
						\phi^{G}\colon \big(L'/2L'\big)^{G} \to 
						\big(\Pic\Yabcbar/2\Pic\Yabcbar\big)^{G},
					\]
					and note that if $\ker\phi$ is nonzero then $(\ker \phi)^{G} 
					= \ker \phi^{G}$ is also nonzero, because any representation 
					of a $2$-group by a nonzero $\F_2$-vector space has a 
					nonzero invariant 
					subspace~\cite{Serre-LinReps}*{Proposition~26}.  Using 
					condition $(8)$, it is easy to establish that $ 
					\big(L'/2L'\big)^{G}$ is a $2$-dimensional $\F_2$-vector 
					space, spanned by the classes $v_1 := [G_1 + F_{10}]$ and 
					$v_2 := [F_2 + F_3 + F_{12}]$.  If $\phi^G(v_1) = 0$ then 
					$G_1 + F_{10} \in 2\Pic\Yabcbar$; however, the intersection 
					pairing on $\Pic\Yabcbar$ is even, and $\frac{1}{2}(G_1 + 
					F_{10})\cdot\frac{1}{2}(G_1 + F_{10}) = 1$. Hence 
					$\phi^G(v_1) \neq 0$.  Similarly, $\phi^G(v_2), \phi^G(v_1 + 
					v_2) \neq 0$, and we conclude that $\ker\phi^G = 0$.
				\end{proof}

				\begin{cor}\label{cor:PicK3}
					Let $\mathbf{a}\in \Z^3_{>0}$ satisfy conditions $(5)$, 
					$(6)$, and $(8)$.  Then $L'  = \Pic\Yabcbar$.	
				\end{cor}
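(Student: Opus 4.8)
The plan is to read off the corollary as the confluence of the two preceding propositions: Proposition~\ref{prop: Pic upper bound} supplies an upper bound on the rank, Proposition~\ref{prop:saturation} supplies saturation, and together with the fact that $L'$ already has rank $15$ these pin down $\Pic\Yabcbar$ exactly. So the whole argument is one of bookkeeping; all of the substantive content lives in the two propositions.

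First I would record that $\rk L' = 15$. The lattice $L = \langle G_1, F_1, \dots, F_{14}\rangle$ is generated by $15$ classes whose intersection matrix, computed in \S\ref{subsec:fibrations}, is nondegenerate, so $\rk L = 15$; since $L \subseteq L'$ with finite index (the discriminants are $2^{17}$ and $2^9$, whence $[L':L] = 2^4$), we also have $\rk L' = 15$. Next, because $\Yabc$ is a K3 surface, $\Pic\Yabcbar$ is a free abelian group of rank $\rho(\Yabc)$. The inclusion $L' \subseteq \Pic\Yabcbar$ forces $\rho(\Yabc) \ge \rk L' = 15$, while conditions $(5)$ and $(6)$ let me invoke Proposition~\ref{prop: Pic upper bound} for the reverse inequality $\rho(\Yabc) \le 15$. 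Hence $\rk\Pic\Yabcbar = 15 = \rk L'$.

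Finally, condition $(8)$ lets me invoke Proposition~\ref{prop:saturation}, which asserts that $L'$ is saturated in $\Pic\Yabcbar$, i.e., that the quotient $\Pic\Yabcbar/L'$ is torsion-free. Since $L'$ and $\Pic\Yabcbar$ have equal rank, this quotient is also finite, and a finite torsion-free abelian group is trivial. Therefore $L' = \Pic\Yabcbar$, as claimed.

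I do not expect a genuine obstacle here. The only point meriting care is the logical order: one must first establish the equality of ranks — so that the saturated sublattice $L'$ has finite, and hence trivial, cokernel — rather than attempting to conclude directly from the discriminant $2^9$. Indeed the discriminant alone does not determine $\Pic\Yabcbar$; it is the combination of the rank bound (which rules out a larger Picard group) with saturation (which rules out a proper finite-index overlattice) that gives the exact identification.
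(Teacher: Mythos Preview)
Your proposal is correct and follows essentially the same approach as the paper's proof: invoke Proposition~\ref{prop: Pic upper bound} (using conditions $(5)$ and $(6)$) to bound $\rho(\Yabc)\le 15$, observe that $L'$ already has rank $15$, and then apply Proposition~\ref{prop:saturation} (using condition $(8)$) to conclude that the full-rank saturated sublattice $L'$ equals $\Pic\Yabcbar$. The paper's version is simply terser.
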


				\begin{proof}
					By Proposition~\ref{prop: Pic upper bound} we have 
					$\rho(\Yabc) \leq 15$.  Thus $L'$ has full rank inside 
					$\Pic\Yabcbar$, and it is saturated by 
					Proposition~\ref{prop:saturation}.
				\end{proof}

			%%%%%%%%%%%%%%%%%%%%%%%%%%%%%%%%%%%%%%%%%%%%%%%%%%%%%%%%%%%%%%%%%%%%
			\subsection{Determining $\Num\Xabcbar$}\label{subsec:Enriques}%%%%
			%%%%%%%%%%%%%%%%%%%%%%%%%%%%%%%%%%%%%%%%%%%%%%%%%%%%%%%%%%%%%%%%%%%%

				Let $M := \langle D_1, C_1, C_2, \ldots, C_9\rangle \subseteq 
				\Num \Xabcbar$.  In \S\ref{subsec:fibrations}, using 
				intersection numbers, we calculated that $\rk M = 10$, and that 
				the discriminant of the intersection lattice is $4$.  Since 
				$\Num \Xabcbar$ is a rank $10$ unimodular lattice 
				(see~\S\ref{subsec:PicEnr}), $M$ is an index $2$ subgroup of 
				$\Num \Xabcbar$.  The following proposition shows that there are 
				only two possible saturations of $M$.

				\begin{proposition}
					\label{prop: NumXbar}
					There exists a divisor $R$ on $\Xabcbar$ such that $2R$ is 
					linearly equivalent to either
					\begin{equation}
						\label{eq: possible Rs in Num}
						C_1 + C_2 + \cdots + C_9 \quad\text{or}\quad
						D_1 + C_2 + \cdots + C_9,
					\end{equation}
					 and such that $\Num\Xabcbar = \langle R, D_1, C_1, \ldots, 
					C_9\rangle$.
				\end{proposition}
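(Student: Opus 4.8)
The plan is to reduce the statement to a lattice computation inside the dual lattice $M^\ast := \Hom(M,\Z)$. Since $\Num\Xabcbar \cong U \oplus E_8(-1)$ is even and unimodular it is self-dual, so dualizing the inclusion $M \subseteq \Num\Xabcbar$ gives a chain $M \subseteq \Num\Xabcbar \subseteq M^\ast$, and the index-$2$ quotient $\Num\Xabcbar/M$ is a nonzero subgroup of the discriminant group $M^\ast/M$ that is isotropic for the discriminant quadratic form $q\colon M^\ast/M \to \Q/2\Z$ (isotropic because $\Num\Xabcbar$ is even). Thus it suffices to determine $M^\ast/M$ together with $q$, and then to read off which index-$2$ overlattices of $M$ are even.

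First I would record the Gram matrix of $M$ in the basis $D_1, C_1, \ldots, C_9$ from the intersection numbers computed in \S\ref{subsec:fibrations}, and then exhibit explicit half-integral representatives for the classes of $M^\ast/M$. Since the discriminant of $M$ is $4$, the group $M^\ast/M$ has order $4$; I would produce the three classes
\[
R_1 := \tfrac12(C_1 + \cdots + C_9), \quad R_2 := \tfrac12(D_1 + C_2 + \cdots + C_9), \quad R_1 - R_2 = \tfrac12(C_1 - D_1),
\]
and verify directly that each pairs integrally against every basis vector of $M$, so that all three lie in $M^\ast$. As they are pairwise distinct modulo $M$ and each has order $2$, they are exactly the nonzero classes, and $M^\ast/M \cong (\Z/2\Z)^2$.

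Next I would evaluate $q$ on these classes using the Gram matrix: a short computation gives $R_1^2 = R_2^2 = 18$ and $(R_1 - R_2)^2 = -1$, so $q(R_1) = q(R_2) = 0$ while $q(R_1 + R_2) = q(R_1 - R_2) = 1$ in $\Q/2\Z$. An index-$2$ overlattice $M + \Z v$, with $v$ representing a nonzero class, is even exactly when $v^2 \in 2\Z$, i.e.\ when $q(v) = 0$; hence precisely two of the three overlattices are even, namely $N_1 := \langle R_1, D_1, C_1, \ldots, C_9\rangle$ and $N_2 := \langle R_2, D_1, C_1, \ldots, C_9\rangle$. Since $\Num\Xabcbar$ is even, its generator over $M$ must be one of the isotropic classes, so $\Num\Xabcbar$ equals $N_1$ or $N_2$. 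In the former case $R := R_1$ works, with $2R = C_1 + \cdots + C_9$; in the latter $R := R_2$ works, with $2R = D_1 + C_2 + \cdots + C_9$; in both cases $\Num\Xabcbar = \langle R, D_1, C_1, \ldots, C_9\rangle$, as asserted.

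The main obstacle is purely computational and lives in the middle step: one must set up the Gram matrix correctly (note the asymmetric entries $C_i \cdot D_i = 2$ versus $C_i \cdot D_j = 1$), and then carefully check both that $R_1$, $R_2$ and their difference lie in $M^\ast$ and that the three self-intersections come out as above---these are what separate the two isotropic (even) classes from the single anisotropic (odd) one. Everything else is formal lattice theory.
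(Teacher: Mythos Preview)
Your proposal is correct and is essentially the same argument as the paper's, recast in discriminant-form language. The paper writes $2R=\sum n_iC_i+n_{10}D_1$ with $n_i\in\{0,1\}$ and uses the two conditions ``$R$ pairs integrally with each basis vector'' and ``$R^2\equiv 0\pmod 2$'' to force $n_2=\cdots=n_9=1$ and $n_1+n_{10}\equiv 1$; your conditions ``$R\in M^\ast$'' and ``$q(R)=0$'' are exactly these, and your explicit identification of $M^\ast/M\cong(\Z/2\Z)^2$ with the three nontrivial classes $R_1,R_2,R_1-R_2$ and the values $q(R_1)=q(R_2)=0$, $q(R_1-R_2)=1$ reproduces the same case analysis.
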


				\begin{proof}
					From the discussion above, we know that $\Num \Xabcbar/M 
					\isom \Z/2\Z$.  Let $R$ be a divisor whose class in 
					$\Num\Xabcbar$ is not in $M$. Without loss of generality, we 
					may assume that 
					\[
						2R = n_1C_1 + \cdots n_9C_9 + n_{10}D_1
					\] 
					in $\Pic\Xabcbar$, where each $n_i \in \{0,1\}$.  Since $R$ 
					pairs integrally with $D_1$ and $C_i$ for all $i$ and $R^2 
					\equiv 0 \pmod 2$, we must have that $n_2 \equiv n_3\equiv 
					\cdots n_9\equiv 1\pmod 2$ and $n_1 + n_{10} \equiv 1 \pmod 
					2$, giving the desired result.
				\end{proof}

			%%%%%%%%%%%%%%%%%%%%%%%%%%%%%%%%%%%%%%%%%%%%%%%%%%%%%%%%%%%%%%%%%%%%
			\subsection{Galois cohomology}\label{subsec:cohom}%%%%%%%%%%%%%%%%%%
			%%%%%%%%%%%%%%%%%%%%%%%%%%%%%%%%%%%%%%%%%%%%%%%%%%%%%%%%%%%%%%%%%%%%

				Throughout this section, we assume that $\mathbf{a} \in 
				\Z^3_{>0}$ is Galois general, and we let $\Kabc$ be the 
				splitting field described in \S\ref{subsec:galoisgeneral}.  
				Since $\Pic \Yabcbar$ and $\Num \Xabcbar$ are torsion-free, to 
				compute the Galois cohomology of $\Pic \Yabcbar$ and $\Num 
				\Xabcbar$ it suffices to compute the action of $G_{\Q}$ on the 
				curves $C_i, D_i, F_i, G_i$.  Thus, the action of $G_{\Q}$ on 
				$\Pic\Yabcbar$ and $\Num\Xabcbar$ factors through 
				$\Gal(\Kabc/\Q)$. The action of $\Gal(\Kabc/\Q)$ is described in 
				Table~\ref{table:galois}.  Together with the generators for 
				$\Pic \Yabcbar$ and $\Num \Xabcbar$ given in 
				\S\S\ref{subsec:saturation}--\ref{subsec:Enriques}, it allows us 
				to prove the following proposition.
				\begin{prop}
					\label{prop: Galois Cohomology}
					Let $\mathbf{a} \in \Z^3_{>0}$ satisfy condition $(8)$. Then 
					$\HH^1(G_{\Q}, \Num \Xabcbar) = \{1\}$.
					\qed
				\end{prop}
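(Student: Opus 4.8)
The plan is to compute $\HH^1(G_{\Q}, \Num \Xabcbar)$ directly, using that the Galois action factors through the finite group $\Gal(\Kabc/\Q)$, whose structure and action on the generators $D_1, C_1, \ldots, C_9$ are recorded in Table~\ref{table:galois}. Since $\Num\Xabcbar$ is a torsion-free $\Z$-module with an explicit action of this finite group, the computation of $\HH^1$ is in principle a finite (if large) cohomological calculation, and I expect it to be carried out in {\tt Magma}. The main conceptual point is to reduce the problem to something tractable.

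First I would exploit the structure of $\Gal(\Kabc/\Q)$. Because $\mathbf{a}$ is Galois general, $\Kabc$ is a degree $2^{18}$ extension, so $\Gal(\Kabc/\Q)$ is a $2$-group. This suggests working with a filtration or with a convenient normal subgroup. A natural approach is to pick a subgroup $H \trianglelefteq \Gal(\Kabc/\Q)$ that acts trivially (or through a small quotient) on $\Num\Xabcbar$, write down the inflation-restriction exact sequence
\[
0 \to \HH^1(G/H, (\Num\Xabcbar)^H) \to \HH^1(G, \Num\Xabcbar) \to \HH^1(H, \Num\Xabcbar)^{G/H},
\]
and bound both ends. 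Here $G := \Gal(\Kabc/\Q)$. If $H$ can be chosen to act trivially on $\Num\Xabcbar$, then $\HH^1(H, \Num\Xabcbar) = \Hom(H, \Num\Xabcbar) = 0$ since $\Num\Xabcbar$ is torsion-free and $H$ is finite; this kills the restriction term and reduces the whole computation to $\HH^1$ of the (typically much smaller) quotient group that acts faithfully on $\Num\Xabcbar$.

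Having reduced to a faithful action of a manageable quotient $\bar G$, I would then compute $\HH^1(\bar G, \Num\Xabcbar)$ explicitly. Using the presentation $\Num\Xabcbar = \langle R, D_1, C_1, \ldots, C_9\rangle$ from Proposition~\ref{prop: NumXbar} together with the explicit matrices describing the $\bar G$-action, one computes the group of $1$-cocycles $Z^1$ and coboundaries $B^1$ as kernels and images of explicit integer matrices and takes the quotient. The even unimodularity of $\Num\Xabcbar \cong U \oplus E_8(-1)$ and the concrete intersection numbers recorded in \S\ref{subsec:Enriques} pin down the lattice and hence the action, so the cocycle computation is fully determined.

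The hard part will be organizing the $2$-group structure so that the cohomology is genuinely computable rather than merely finite: a priori $\HH^1$ of a $2$-group acting on a rank $10$ lattice could be nontrivial, and the assertion $\HH^1 = \{1\}$ is a delicate cancellation that must come out exactly. I expect the crux to be verifying that every $1$-cocycle is a coboundary, which amounts to showing that a certain explicit integer matrix (assembled from the $\bar G$-action on the generators) has full-rank image modulo the coboundary relations. This is where condition $(8)$ is essential: it guarantees the Galois image is as large as possible, so there are no unexpected invariants that could contribute nonzero cohomology classes, and it makes the cocycle/coboundary comparison come out to zero.
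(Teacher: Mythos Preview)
Your proposal is correct and aligns with the paper's approach: the paper states the result with a \qed and remarks that the computation was carried out in {\tt Magma} using the explicit Galois action recorded in Table~\ref{table:galois} (the code is appended to the source). Your inflation--restriction reduction to the quotient acting faithfully on $\Num\Xabcbar$ is a sound organizational step, though the paper simply feeds all generators directly to {\tt Magma}; the paper also notes, as an alternative route for a by-hand check, the isomorphism $\HH^1(G,A)[m]\cong (A/mA)^G/(A^G/mA^G)$ for torsion-free $A$, which you might find cleaner than raw cocycle/coboundary bookkeeping. One small correction: your explanation that condition~$(8)$ prevents ``unexpected invariants'' is slightly off---the real role of condition~$(8)$ is to guarantee that the Galois action on $\Num\Xabcbar$ is \emph{exactly} the one tabulated (no accidental identifications among the field generators), so that the computation is well-posed; a smaller Galois image would not obviously enlarge $\HH^1$, but it would change the module structure being computed.
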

				We used \texttt{Magma}~\cite{magma} for the computations 
				involved in Proposition~\ref{prop: Galois Cohomology}, but the 
				industrious reader may verify the results by hand, using the 
				following lemma.
				\begin{lemma}
					Let $G$ be a (pro-)finite group and $A$ be a (continuous) 
					torsion-free $G$-module.  Then
					\[
						\HH^1(G, A)[m] \isom 
						\frac{\left(A/mA\right)^G}{A^G/mA^G}.
						\eqno{\qed}
					\]
				\end{lemma}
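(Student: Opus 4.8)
The plan is to exploit the torsion-freeness of $A$ to produce a clean short exact sequence and then read off the claim from the associated long exact sequence in group cohomology. Since $A$ is torsion-free, multiplication by $m$ is injective, so
\[
	0 \to A \xrightarrow{\ m\ } A \to A/mA \to 0
\]
is a short exact sequence of (continuous) $G$-modules. Taking $G$-cohomology yields the exact sequence
\[
	0 \to A^G \xrightarrow{\ m\ } A^G \xrightarrow{\ \pi\ } (A/mA)^G
		\xrightarrow{\ \delta\ } \HH^1(G,A) \xrightarrow{\ m\ } \HH^1(G,A),
\]
where $\delta$ is the connecting homomorphism and $\pi$ is induced by reduction modulo $m$.

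Next I would identify the two ends of the relevant segment. Exactness at the last displayed $\HH^1(G,A)$ shows that the image of $\delta$ is exactly the kernel of multiplication by $m$ on $\HH^1(G,A)$, namely $\HH^1(G,A)[m]$. On the other end, exactness at $(A/mA)^G$ gives $\ker\delta = \im\pi$, so the first isomorphism theorem applied to $\delta$ produces a canonical isomorphism
\[
	\HH^1(G,A)[m] \isom (A/mA)^G / \im\pi.
\]

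It remains to identify $\im\pi$ with $A^G/mA^G$, and this is the one spot where torsion-freeness is used a second time. By definition $\im\pi \isom A^G/\ker(\pi|_{A^G}) = A^G/(A^G\cap mA)$, so the claim reduces to the equality $A^G\cap mA = mA^G$. The inclusion $mA^G\subseteq A^G\cap mA$ is immediate. Conversely, if $a=mb$ with $a\in A^G$ and $b\in A$, then for every $g\in G$ we have $m(gb-b)=ga-a=0$, and since $A$ is torsion-free this forces $gb=b$; hence $b\in A^G$ and $a\in mA^G$. Combining this identification of $\im\pi$ with the displayed isomorphism gives
\[
	\HH^1(G,A)[m] \isom \frac{(A/mA)^G}{A^G/mA^G},
\]
as desired. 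The only real content beyond formal diagram-chasing is the equality $A^G\cap mA=mA^G$, which I expect to be the main (and minor) obstacle; everything else follows from the long exact sequence.
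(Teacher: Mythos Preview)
Your argument is correct and is exactly the standard proof of this well-known fact: apply the long exact sequence in cohomology to the short exact sequence $0\to A\xrightarrow{m}A\to A/mA\to 0$, then identify $\im\pi$ with $A^G/mA^G$ using torsion-freeness. The paper itself offers no proof (the lemma is stated with a \qed and left to the reader), so there is nothing further to compare.
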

				Proposition~\ref{prop: Galois Cohomology} implies that the long 
				exact sequence in group cohomology associated 
				to~\eqref{eqn:PicEnr} is given by
				\[
					0 \to \langle K_{\Xabc} \rangle \to \Pic \Xabc \to 
					\Num \Xabc \to \HH^1(G_{\Q},\langle K_{\Xabc}\rangle) \to 
					\HH^1(G_{\Q}, \Pic \Xabcbar) \to 0.
				\]
				The explicit description of the Galois action given in 
				Table~\ref{table:galois} allows us to verify that $\Pic \Xabc = 
				K_{\Xabc} \oplus \Z\cdot[C_1 + D_1]$ and that $\Num \Xabc = 
				\Z\cdot[C_1 + D_1]$.  The following proposition now follows 
				easily.
				\begin{prop}\label{prop:H1PicEnr}
					Let $\mathbf{a} \in \Z^3_{>0}$ satisfy condition $(8)$. Then 
					\[
						\HH^1(G_{\Q}, \Pic \Xabcbar) \isom 
						\HH^1(G_{\Q}, \langle K_{\Xabc}\rangle).\eqno{\qed}
					\]
				\end{prop}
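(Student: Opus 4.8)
The plan is to exploit the long exact cohomology sequence attached to \eqref{eqn:PicEnr} together with the vanishing established in Proposition~\ref{prop: Galois Cohomology}. Since $\HH^1(G_{\Q}, \Num\Xabcbar) = \{1\}$, this sequence reads
\[
    0 \to \langle K_{\Xabc}\rangle \to \Pic\Xabc \to \Num\Xabc
    \xrightarrow{\delta} \HH^1(G_{\Q}, \langle K_{\Xabc}\rangle)
    \to \HH^1(G_{\Q}, \Pic\Xabcbar) \to 0,
\]
where $\Pic\Xabc$ and $\Num\Xabc$ denote the $G_{\Q}$-invariants of $\Pic\Xabcbar$ and $\Num\Xabcbar$. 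Reading off the tail, the asserted isomorphism is equivalent to the vanishing of the connecting map $\delta$, and $\delta = 0$ will follow as soon as the natural map $\Pic\Xabc \to \Num\Xabc$ is shown to be surjective.

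To prove surjectivity I would first invoke condition~$(8)$, which guarantees that the $G_{\Q}$-action on $\Pic\Xabcbar$ and $\Num\Xabcbar$ factors through $\Gal(\Kabc/\Q)$ and is recorded in Table~\ref{table:galois}. Using the explicit generators of $\Pic\Yabcbar$ from Corollary~\ref{cor:PicK3}, the generators of $\Num\Xabcbar$ from Proposition~\ref{prop: NumXbar}, and the isomorphism $\Num\Xabcbar \isom (\Pic\Yabcbar)^{\sigma}$ from \S\ref{subsec:PicEnr}, I would then compute the fixed lattices directly. The outcome is that $\Pic\Xabc = \langle K_{\Xabc}\rangle \oplus \Z\cdot[C_1 + D_1]$ while $\Num\Xabc = \Z\cdot[C_1 + D_1]$.

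With these descriptions in hand the map $\Pic\Xabc \to \Num\Xabc$ carries $K_{\Xabc} \mapsto 0$ and $[C_1 + D_1] \mapsto [C_1 + D_1]$, hence is visibly surjective. Surjectivity forces $\delta = 0$, so the remaining segment of the exact sequence collapses to $0 \to \HH^1(G_{\Q}, \langle K_{\Xabc}\rangle) \to \HH^1(G_{\Q}, \Pic\Xabcbar) \to 0$, which is exactly the claimed isomorphism. The only genuine obstacle is the invariants computation underlying the two displayed lattice identities: one must verify that the fixed part of $\Pic\Xabcbar$ is rank $2$ with the stated generators and that the fixed part of $\Num\Xabcbar$ is the rank $1$ group $\Z\cdot[C_1+D_1]$. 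This is a finite but intricate bookkeeping exercise with the Galois action of Table~\ref{table:galois}, best confirmed with \texttt{Magma}; everything after it is formal diagram-chasing.
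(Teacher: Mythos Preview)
Your proposal is correct and follows essentially the same approach as the paper: both use Proposition~\ref{prop: Galois Cohomology} to truncate the long exact sequence of~\eqref{eqn:PicEnr} at $\HH^1(G_{\Q},\Pic\Xabcbar)$, then verify via Table~\ref{table:galois} that $\Pic\Xabc = \langle K_{\Xabc}\rangle \oplus \Z\cdot[C_1+D_1]$ and $\Num\Xabc = \Z\cdot[C_1+D_1]$, so that $\Pic\Xabc \to \Num\Xabc$ is surjective and the connecting map vanishes. One small remark: your appeal to Corollary~\ref{cor:PicK3} is unnecessary (and that corollary assumes conditions $(5)$ and $(6)$, which this proposition does not), since the invariants computation only needs Proposition~\ref{prop: NumXbar} and the Galois action of Table~\ref{table:galois}, both available under condition~$(8)$ alone.
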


				\begin{remark}
					Table~\ref{table:galois} also allows us to show that if 
					$\mathbf{a}$ satisfies conditions $(5),(6)$ and $(8)$ then 
					$\HH^1(G_{\Q}, \Pic \Yabcbar) \isom \Z/2\Z$. While this 
					result is not logically necessary for our purposes, it is 
					worth noting that it implies that the algebra $h^*(\calA)$ 
					in the proof of Proposition~\ref{prop:noQpoints} gives a 
					representative of the nontrivial class of this cohomology 
					group, and thus, up to adjustment by constant Azumaya 
					algebras arising from the base field, $h^*(\calA)$ is the 
					\emph{only} algebra that can give an algebraic Brauer-Manin 
					obstruction to the Hasse principle on $\Yabc$.
				\end{remark}

	%%%%%%%%%%%%%%%%%%%%%%%%%%%%%%%%%%%%%%%%%%%%%%%%%%%%%%%%%%%%%%%%%%%%%%%%%%%%
	\section{The Brauer-Manin obstruction}\label{sec:BrSet}%%%%%%%%%%%%%%%%%%%%%
	%%%%%%%%%%%%%%%%%%%%%%%%%%%%%%%%%%%%%%%%%%%%%%%%%%%%%%%%%%%%%%%%%%%%%%%%%%%%

		\begin{prop}\label{prop:algbrauerset}
			Let $\mathbf{a} \in \Z^3_{>0}$ satisfy conditions $(7)$ and $(8)$. 
			Then $\Xabc(\Adeles_{\Q})^{\Br_1} \neq \emptyset$.
		\end{prop}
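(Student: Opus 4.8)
The plan is to show that the image under $f$ of any adelic point of $\Yabc$ already lies in $\Xabc(\Adeles_{\Q})^{\Br_1}$; nonemptiness then follows at once from condition $(7)$, which furnishes a point $(Q_v)_v \in \Yabc(\Adeles_{\Q})$ (and hence $f((Q_v)_v) \in \Xabc(\Adeles_{\Q})$). The conceptual crux is that \emph{every} algebraic Brauer class on $\Xabc$ pulls back to a constant class on $\Yabc$, so that its evaluation pairing against $f((Q_v)_v)$ is forced to vanish by reciprocity. Note that condition $(7)$ is used only for nonemptiness, while condition $(8)$ enters through the Picard-group computations of \S\ref{sec:Picard}.

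First I would identify the algebraic Brauer group. The Hochschild--Serre spectral sequence, together with Hilbert~90 and the vanishing $\HH^3(G_{\Q},\Qbar^\times) = 0$, yields the standard isomorphism $\Br_1\Xabc/\Br_0\Xabc \isom \HH^1(G_{\Q}, \Pic\Xabcbar)$, where $\Br_0$ denotes the image of $\Br\Q$. By Proposition~\ref{prop:H1PicEnr} (which invokes condition~$(8)$), the inclusion $\langle K_{\Xabc}\rangle \hookrightarrow \Pic\Xabcbar$ induces an isomorphism $\HH^1(G_{\Q}, \langle K_{\Xabc}\rangle) \isom \HH^1(G_{\Q}, \Pic\Xabcbar)$; in other words, the entire algebraic Brauer group modulo constants is accounted for by the order-two subgroup generated by the canonical class.

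Next I would show that $f^*$ annihilates $\Br_1\Xabc/\Br_0\Xabc$. The pullback $f^*\colon \Br_1\Xabc \to \Br_1\Yabc$ is compatible, by functoriality of the Hochschild--Serre construction, with the map $\HH^1(G_{\Q}, \Pic\Xabcbar) \to \HH^1(G_{\Q}, \Pic\Yabcbar)$ induced by $f^*\colon \Pic\Xabcbar \to \Pic\Yabcbar$. By the previous paragraph this map factors through $\HH^1(G_{\Q}, \langle K_{\Xabc}\rangle)$, and since $f$ is \'etale we have $f^* K_{\Xabc} = K_{\Yabc} = 0$ in $\Pic\Yabcbar$; indeed $\langle K_{\Xabc}\rangle$ is precisely the kernel of $f^*$ exhibited in~\eqref{eq: Pic fixed by sigma}. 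Hence the induced map on $\HH^1$ vanishes, so $f^*\Br_1\Xabc \subseteq \Br_0\Yabc$.

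Finally I would conclude. For any $(Q_v)_v \in \Yabc(\Adeles_{\Q})$ and any $\calA \in \Br_1\Xabc$, functoriality of the evaluation pairing gives $\langle (f(Q_v))_v, \calA\rangle = \langle (Q_v)_v, f^*\calA\rangle$; since $f^*\calA$ is a constant class, the fundamental exact sequence of class field theory forces this sum to be $0$. Thus $f((Q_v)_v)$ is orthogonal to all of $\Br_1\Xabc$, so $\Xabc(\Adeles_{\Q})^{\Br_1} \neq \emptyset$. The main obstacle is the middle step---certifying that no algebraic Brauer class survives pullback to $\Yabc$---but this is already delivered by the Picard computations of \S\ref{sec:Picard} via Proposition~\ref{prop:H1PicEnr}; the surrounding arguments are formal consequences of functoriality and global reciprocity.
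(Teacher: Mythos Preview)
Your argument is correct. You show directly that $f^*$ kills $\Br_1\Xabc/\Br_0\Xabc$ by observing that, via Proposition~\ref{prop:H1PicEnr}, every class in $\HH^1(G_\Q,\Pic\Xabcbar)$ comes from $\HH^1(G_\Q,\langle K_{\Xabc}\rangle)$, and $f^*K_{\Xabc}=K_{\Yabc}=0$; functoriality of Hochschild--Serre and global reciprocity then force $f\big(\Yabc(\Adeles_\Q)\big)\subseteq \Xabc(\Adeles_\Q)^{\Br_1}$.

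The paper takes a different route. Rather than showing only the containment $f\big(\Yabc(\Adeles_\Q)\big)\subseteq \Xabc(\Adeles_\Q)^{\Br_1}$, it invokes Skorobogatov's descent theorem \cite[Theorem~6.1.2]{Skorobogatov-torsors} to obtain the full equality
\[
\Xabc(\Adeles_{\Q})^{\Br_1} = \bigcup_{\tau \in \HH^1(G_{\Q},\langle K_{\Xabc}\rangle)} f^{\tau}\!\left(\Yabc^{\tau}(\Adeles_{\Q})\right),
\]
from which nonemptiness follows by taking $\tau$ trivial. Your approach is more elementary and self-contained, avoiding the descent machinery entirely; the paper's approach, however, yields the stronger partition statement~\eqref{eqn:partition}, which is not discarded but reused verbatim in the proof of Proposition~\ref{prop:moreover} to describe $\Xabc(\Adeles_\Q)^{\Br}$. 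So your argument suffices for the proposition at hand, but if you were to continue to Proposition~\ref{prop:moreover} you would either need to recover~\eqref{eqn:partition} separately or reorganize that proof around your inclusion instead.
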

		\begin{proof}
			Since $\mathbf{a}$ satisfies condition $(8)$, by 
			Proposition~\ref{prop:H1PicEnr} we know that $\HH^1(G_{\Q}, \Pic 
			\Xabcbar) \isom \HH^1(G_{\Q}, \langle K_{\Xabc} \rangle)$.  Using 
			this in conjunction with~\cite[Theorem 6.1.2]{Skorobogatov-torsors} 
			and the isomorphism coming from the Hochschild-Serre spectral 
			sequence
			\[
				\Br_1 X/\im \Br \Q \xrightarrow{\sim} \HH^1(G_{\Q}, \Pic \Xbar),
			\]
			we obtain the following partition of the algebraic Brauer set
			\begin{align}\label{eqn:partition}
				\Xabc(\Adeles_{\Q})^{\Br_1} = \bigcup_{\tau \in \HH^1(G_{\Q}, 
				\langle K_{\Xabc} \rangle)} 
				f^{\tau}\left(\Yabc^{\tau}(\Adeles_{\Q})\right).
			\end{align}
			By condition (7) we have $\Yabc(\Adeles_{\Q}) \neq \emptyset$, and 
			thus $\Xabc(\Adeles_{\Q})^{\Br_1} \neq \emptyset$. 
		\end{proof}

		\begin{prop}\label{prop:moreover}
			Let $\mathbf{a} \in \Z^3_{>0}$ satisfy conditions $(5)$--\,$(8)$, 
			and assume that $\Xabc(\Adeles_{\Q})^{\Br} = \emptyset$. Then there 
			exists an element $\calA \in \Br Y \setminus\Br_1 Y$ such that 
			$\Yabc(\Adeles_{\Q})^{\calA} = \emptyset$.
		\end{prop}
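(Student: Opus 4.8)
The plan is to isolate the single transcendental class on $\Xabc$ that is responsible for the obstruction, pull it back along the double cover $f\colon\Yabc\to\Xabc$, and then promote the resulting class—which turns out to be algebraic—to a genuinely transcendental one on $\Yabc$. First I would pin down the transcendental class on $\Xabc$. Since $\mathbf a$ satisfies $(7)$ and $(8)$, Proposition~\ref{prop:algbrauerset} gives $\Xabc(\Adeles_{\Q})^{\Br_1}\neq\emptyset$, while by hypothesis $\Xabc(\Adeles_{\Q})^{\Br}=\emptyset$. Hence $\Br\Xabc/\Br_1\Xabc$ is nontrivial; as this quotient injects into the $G_{\Q}$-invariants of the geometric Brauer group, which for an Enriques surface is $\Z/2\Z$ (with trivial Galois action), it equals $\Z/2\Z$ and is generated by the image of some $\calB\in\Br\Xabc\setminus\Br_1\Xabc$. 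Orthogonality to $\Br\Xabc$ then amounts to orthogonality to $\Br_1\Xabc$ together with $\calB$, so
\[
\emptyset=\Xabc(\Adeles_{\Q})^{\Br}=\Xabc(\Adeles_{\Q})^{\Br_1}\cap\Xabc(\Adeles_{\Q})^{\calB}.
\]

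Next I would transport this emptiness to $\Yabc$ via the partition~\eqref{eqn:partition}. For the untwisted cover $f=f^{(1)}$ and any $Q\in\Yabc(\Adeles_{\Q})$, the point $f(Q)$ lies in $\Xabc(\Adeles_{\Q})^{\Br_1}$ by~\eqref{eqn:partition}, hence not in $\Xabc(\Adeles_{\Q})^{\calB}$ by the display above. Functoriality of the evaluation pairing gives $\langle Q, f^{*}\calB\rangle=\langle f(Q),\calB\rangle\neq0$, so
\[
\Yabc(\Adeles_{\Q})^{f^{*}\calB}=\emptyset .
\]
This already records an obstruction, but $f^{*}\calB$ is \emph{algebraic}: the pullback $\Br\Xabcbar\to\Br\Yabcbar$ on geometric Brauer groups vanishes for a K3 double cover of an Enriques surface (the nontrivial class of $\Br\Xabcbar$ dies on the simply-connected-in-degree-one cover), so $f^{*}\calB\in\Br_1\Yabc$. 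In fact, since $\Br_1\Yabc/\Br\Q\cong\Z/2\Z$ and $\Yabc(\Adeles_{\Q})\neq\emptyset$ by $(7)$, reciprocity forces $f^{*}\calB$ to coincide, modulo constant algebras, with the pulled-back Birch--Swinnerton-Dyer class from the proof of Proposition~\ref{prop:noQpoints}; in particular $f^{*}\calB$ is $2$-torsion, and the pairing value above is the nonzero element $\tfrac12\in\Q/\Z$.

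The crux is therefore to upgrade $f^{*}\calB$ to a transcendental class carrying the same obstruction. My plan is to produce a $\sigma$-invariant class $\calA\in\Br\Yabc$ with $2\calA=f^{*}\calB$ whose image $\overline{\calA}\in\Br\Yabcbar[2]$ is nonzero. Granting such an $\calA$, for every $Q\in\Yabc(\Adeles_{\Q})$ one gets $2\langle Q,\calA\rangle=\langle Q,f^{*}\calB\rangle=\tfrac12$, whence $\langle Q,\calA\rangle\in\{\tfrac14,\tfrac34\}$ is nonzero; thus $\Yabc(\Adeles_{\Q})^{\calA}=\emptyset$ with $\calA\in\Br\Yabc\setminus\Br_1\Yabc$, exactly as required. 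The main obstacle is the existence of $\calA$ together with the verification that $\overline{\calA}\neq0$—equivalently, that the algebraic class $f^{*}\calB$ becomes divisible by $2$ \emph{precisely through a transcendental class} stable under $\sigma$. This is the step where the transcendental part of $\Br\Yabcbar$ and the action of $\sigma$ on it must be controlled, and I expect it to require the full strength of the lattice-theoretic description of $\Pic\Yabcbar$ and its Galois/involution module structure established in \S\ref{sec:Picard}.
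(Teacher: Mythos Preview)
Your argument goes wrong at the claim that the pullback $f^{*}\colon\Br\Xabcbar\to\Br\Yabcbar$ on geometric Brauer groups vanishes. This is not a general fact about K3 double covers of Enriques surfaces: Beauville~\cite[Cor.~5.7]{Beauville-BrEnr} shows that $f^{*}$ is injective if and only if there is no $D\in\Pic\Yabcbar$ with $D^{2}\equiv 2\pmod 4$ and $\sigma(D)=-D$. Your heuristic that ``the class dies on the simply-connected cover'' conflates the vanishing of $H^{3}(\Yabcbar,\Z)$ with vanishing of the image in $\Br\Yabcbar$; but for a K3 surface the Brauer group is $(\Q/\Z)^{22-\rho}$, coming from the transcendental lattice, not from $H^{3}$.

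In fact the paper's proof runs exactly parallel to yours through the display $\Yabc(\Adeles_{\Q})^{f^{*}\calB}=\emptyset$, and then finishes immediately by checking Beauville's criterion against the explicit basis of $\Pic\Yabcbar$ from Corollary~\ref{cor:PicK3}: one computes the $(-1)$-eigenspace of $\sigma$ on $\Pic\Yabcbar$ and verifies that no vector there has self-intersection $\equiv 2\pmod 4$. Hence $f^{*}$ is \emph{injective} on geometric Brauer groups, so $f^{*}\calB$ is already transcendental and one takes $\calA=f^{*}\calB$. Your entire ``upgrade'' step---finding a $4$-torsion $\calA$ with $2\calA=f^{*}\calB$ and nonzero geometric image---is therefore both unnecessary and, as you acknowledge, unfinished; and it rests on a premise that is false for this surface.
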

		\begin{proof}
			Since $\Br \Xbar \isom \Z/2\Z$ for \emph{any}  Enriques surface 
			$X$~\cite[p.\ 3223]{HS-Enriques}, there is at most one nontrivial 
			class in $\Br X/\Br_1 X$.  If $\Br \Xabc = \Br_1 \Xabc$, then 
			$\Xabc(\Adeles_{\Q})^{\Br} \neq \emptyset$ (by 
			Proposition~\ref{prop:algbrauerset}), contradicting our hypotheses.  
			Thus we may assume that there exists an element $\calA' \in \Br 
			\Xabc \setminus \Br_1 \Xabc$.  Then $\Xabc(\Adeles_{\Q})^{\Br} = 
			\Xabc(\Adeles_{\Q})^{\Br_1} \cap \Xabc(\Adeles_{\Q})^{\calA'}$, and 
			\eqref{eqn:partition}, together with functoriality of the Brauer 
			group imply that
			\[
				\Xabc(\Adeles_{\Q})^{\Br} = \bigcup_{\tau \in \HH^1(G_{\Q}, 
				\langle K_{\Xabc} \rangle)} 
		f^{\tau}\left(\Yabc^{\tau}(\Adeles_{\Q})^{(f^{\tau})^*\calA'}\right).
			\]
			By assumption, we have $\Xabc(\Adeles_{\Q})^{\Br} = \emptyset$, so 
			it suffices to show that $f^*\calA'\notin\Br_1 \Yabc$, because then 
			we can take $\calA = f^*\calA'$.  Equivalently, we show that the map 
			$f^*\colon\Br \Xabcbar \to \Br \Yabcbar$ is injective, using 
			Beauville's criterion~\cite[Cor. 5.7]{Beauville-BrEnr}: $f^*$ is 
			injective if and only if there is no divisor class $D\in \Pic 
			\Yabcbar$ such that $D^2 \equiv 2 \pmod 4$ and $\sigma(D) = -D$.  
			This is a straightforward computation, using the integral basis of 
			$\Pic \Yabcbar$ given in Corollary~\ref{cor:PicK3}
		\end{proof}

	%%%%%%%%%%%%%%%%%%%%%%%%%%%%%%%%%%%%%%%%%%%%%%%%%%%%%%%%%%%%%%%%%%%%%%%%%%%%
	\section{Proof of Theorems~\ref{thm:main-abc} and~\ref{thm:main}}%%%%%%%%%%%
	\label{sec:proofofprops}%%%%%%%%%%%%%%%%%%%%%%%%%%%%%%%%%%%%%%%%%%%%%%%%%%%%
	%%%%%%%%%%%%%%%%%%%%%%%%%%%%%%%%%%%%%%%%%%%%%%%%%%%%%%%%%%%%%%%%%%%%%%%%%%%%

		\begin{lemma}\label{lem:localpoints}
			$Y_{(12, 111, 13)}(\Adeles_{\Q}) \neq \emptyset$.
		\end{lemma}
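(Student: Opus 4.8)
The plan is to verify that $Y_{(12,111,13)}$ has a $\Q_v$-point for every place $v$ of $\Q$. Since a smooth projective variety over $\Q$ is locally soluble outside a finite set of places, this reduces to producing a real point and then checking an explicit finite list of primes.

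First I would handle the archimedean place by hand. Taking $x = y = 1$ and $z = 2$ in the three defining quadrics forces $s^2 = 21$, $t^2 = 3$, and $u^2 = 175$, all of which are nonnegative, so
\[
	(\sqrt{21} : \sqrt{3} : 5\sqrt{7} : 1 : 1 : 2) \in Y_{(12,111,13)}(\R).
\]

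Next, for a prime $p$ of good reduction I would appeal to the Weil conjectures. Writing $Y_p$ for the reduction, the eigenvalues of Frobenius on $\HH^2_{\et}(Y_p\times_{\F_p}\Fbar_p,\Q_\ell)$ all have absolute value $p$, and there are $b_2 = 22$ of them; hence the Lefschetz trace formula gives
\[
	\#Y_p(\F_p) = 1 + p^2 + \tr\bigl(\Frob_p \mid \HH^2_{\et}(Y_p\times_{\F_p}\Fbar_p,\Q_\ell)\bigr) \geq p^2 - 22p + 1,
\]
which is positive once $p \geq 23$. As good reduction makes every $\F_p$-point smooth, Hensel's lemma lifts such a point to $Y_{(12,111,13)}(\Q_p)$. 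For the good primes $p = 7$ and $p = 11$ I would instead read off the point counts $\#Y_7(\F_7) = 44$ and $\#Y_{11}(\F_{11}) = 168$ directly from the characteristic polynomials $\widetilde{\psi}_7$ and $\widetilde{\psi}_{11}$ computed in the proof of Proposition~\ref{prop: Pic upper bound}, and apply the same Hensel argument.

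This leaves a finite, explicitly computable set of primes: the small primes $p \in \{2,3,5,13,17,19\}$ not yet treated, together with the primes of bad reduction (those dividing the discriminant of an integral model of $Y_{(12,111,13)}$, such as $157$, $821$, and $1433$). For each such $p$ I would exhibit a smooth $\F_p$-point and lift it by Hensel, or write down a $\Q_p$-point by hand; for the large bad primes a Lang--Weil estimate suffices, since the reduction is a surface whose singular locus is at most one-dimensional and so contributes only $O(p)$ of the $\approx p^2$ points, guaranteeing a smooth point to lift. These verifications form a routine finite computation, best carried out in \texttt{Magma}. The main obstacle is precisely the bad primes: there the reduction is singular, so one cannot count points blindly and lift an arbitrary one, and some bad primes (such as $821$ and $1433$) are large enough that a naive search is unappealing. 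The resolution is that we only need \emph{existence} of local points, and a mildly singular surface over $\F_p$ still carries far more smooth points than singular ones, so locating a single smooth point and applying Hensel's lemma succeeds at every bad prime.
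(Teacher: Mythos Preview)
Your proposal is correct and follows essentially the same strategy as the paper: Weil bounds plus Hensel for large good primes, then a finite check at small primes and primes of bad reduction (the paper's full list of bad primes is $2,3,5,13,37,59,151,157,179,821,881,1433$, and for $p=2,3,5$ it writes down explicit $\Q_p$-points rather than deferring to \texttt{Magma}). Your extraction of $\#Y_7(\F_7)$ and $\#Y_{11}(\F_{11})$ from the polynomials in Proposition~\ref{prop: Pic upper bound} is a pleasant shortcut the paper does not take, but otherwise the arguments coincide.
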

		\begin{proof}
			By the Weil conjectures, if $p \geq 22$ is a prime of good 
			reduction, then $Y_{(12,111,13)}$ has $\F_p$-points.  Hensel's lemma 
			then implies that $Y_{(12,111,13)}(\Q_p)\neq\emptyset$ for all such 
			primes. It remains to consider the infinite place, the primes less 

			than $22$, and the primes of bad reduction: 
			$2,3,5,13,37,59,151,157,179,821,881$ and $1433$. It is easy to check 
			that $Y_{(12,111,13)}(\R) \neq \emptyset$, and that for a finite 
			prime $p>5$ in our list, $Y_{(12,111,13)}$ has a smooth 
			$\F_p$-point.  For the remaining three primes we exhibit explicit 
			local points:
			\begin{align*}
				(\sqrt{129}:2\sqrt{21/5}:\sqrt{2113}:1:4:5) 
					& \in Y_{(12,111,13)}(\Q_2),\\
				(0:0:\sqrt{821/5}:-2:1:\sqrt{2/5}) &\in  
					Y_{(12,111,13)}(\Q_3),\\
				(1:2\sqrt{-1}:\sqrt{136}:1:-4:1) &\in  Y_{(12,111,13)}(\Q_5). 
				\tag*{\qed}
			\end{align*}
				%Can use same x,y,z for general a,b,c under the assumption that
				% a + c is 1 mod 8
				% a + b + c is 1 mod 3
				% a + b + c is 1 mod 5
				\hideqed
		\end{proof}

			\begin{proof}[Proof of Theorem~\ref{thm:main-abc}]
				Combining Propositions~\ref{prop:noQpoints} 
				and~\ref{prop:algbrauerset} we obtain the first statement.  The 
				second statement follows from Proposition~\ref{prop:moreover}.
			\end{proof}	

			\begin{proof}[Proof of Theorem~\ref{thm:main}]
				The triplet $\mathbf{a} = (12,111,13)$ satisfies the eight 
				required conditions: $(1)-(6)$ and $(8)$ are easily checked, and 
				$(7)$ follows from Lemma~\ref{lem:localpoints}.
			\end{proof}

			%%%%%%%%%%%%%%%%%%%%%%%%%%%%%%%%%%%%%%%%%%%%%%%%%%%%%%%%%%%%%%%%%%%%
			\subsection{More possibilities}%%%%%%%%%%%%%%%%%%%%%%%%%%%%%%%%%%%%%
			%%%%%%%%%%%%%%%%%%%%%%%%%%%%%%%%%%%%%%%%%%%%%%%%%%%%%%%%%%%%%%%%%%%%
				Although we have shown only that the triplet 
				$\mathbf{a} = (12,111,13)$ satisfies conditions $(1)$--$(8)$, 
				there are in fact many triplets that do the job.  Additionally, 
				we can weaken conditions $(5)$ and $(6)$, and thus find even 
				more triplets.  

				We used conditions $(5)$ and $(6)$ to ensure that 
				$\rho(\Yabcseven) = \rho(\Yabceleven) = 16$ and that the 
				discriminants of the respective intersection lattices did not 
				differ by a square.  Testing all possible isomorphism classes of 
				$\Yabc$ modulo $7$ and modulo $11$, we found $10\,380$ possible 
				congruence classes of $\mathbf{a}$ modulo $77$ that have the 
				desired property concerning Picard numbers and discriminants.

				A computer search shows that there are $202$ triplets 
				$\mathbf{a}\in\Z^3_{>0}$ satisfying conditions $(1)$--$(4)$ and 
				$(7)$ and these weaker versions of $(5)$ and $(6)$ with $a + b + 
				c \leq 500$.  We do not know how to verify condition $(8)$ 
				efficiently on a computer.  However, Hilbert's irreducibility 
				theorem says we should expect condition $(8)$ to hold ``almost 
				always'', so we feel this data does give some evidence that 
				several triplets $\mathbf{a}$ satisfying conditions $(1)$--$(8)$ 
				exist.

			\vspace{-.05in}
			%%%%%%%%%%%%%%%%%%%%%%%%%%%%%%%%%%%%%%%%%%%%%%%%%%%%%%%%%%%%%%%%%%%%
			\subsection{Varying the del Pezzo surface}\label{subsec:dp4details}%
			%%%%%%%%%%%%%%%%%%%%%%%%%%%%%%%%%%%%%%%%%%%%%%%%%%%%%%%%%%%%%%%%%%%%
				The Enriques surface $\Xabc$ was built up from the degree $4$ 
				del Pezzo surface $S$ given by~\eqref{eq:dp4-1} 
				and~\eqref{eq:dp4-2}.  The equality $\HH^1(G_{\Q}, \Num 
				\Xabcbar) = \{1\}$ depends on the interaction of $\sigma$ with 
				$S$ in a way that is perhaps not obvious in the proof.  More 
				precisely, consider the following K3 surfaces
				\[
					\begin{array}{lrllrl}
						& xy & = z^2 - 5s^2 & & xy & = s^2 - 5u^2\\
						Y_1: & (x + y)(x + 2y) & = z^2 - 5t^2 & 
						Y_2: & (x + y)(x + 2y) & = s^2 - 5t^2\\
						& Q_1(x,y,z) & = \Qtilde_1(s,t,u) & 
						& Q_2(x,y,z) & = \Qtilde_2(s,t,u)
					\end{array}
				\]
				where $Q_i, \Qtilde_i$ are any quadrics such that 
				$\sigma|_{Y_i}$ has no fixed points, and let $X_i := Y_i/\sigma, 
				i = 1,2$, be the corresponding Enriques surfaces.  Note that 
				each $Y_i$ maps to a del Pezzo surface which is $\Q$-isomorphic 
				to $S$.  In this case $\#\HH^1(G_{\Q}, \Num \Xbar_i)\geq 2$ 
				regardless of the choice of $Q_i, \Qtilde_i$.  In fact, the 
				quaternion algebras $\left(\frac{5z^2}{s^2}, 
				\frac{x+y}{x}\right) \in \Br \kk(X_1)$ and $\left(5, 
				\frac{x+y}{x}\right) \in \Br \kk(X_2)$ lift to non-trivial 
				elements in $\Br X_1$ and $\Br X_2$, respectively.
				\vspace{-.05in}
				\begin{remark}
					Although $\frac{z^2}{s^2}$ is a square in $\kk(Y_2)$, it is 
					\emph{not} a square in $\kk(X_2)$, so 
					$\left(\frac{5z^2}{s^2}, \frac{x+y}{x}\right) \in \Br 
					\kk(X_1)$ is not necessarily equal to $\left(5, 
					\frac{x+y}{x}\right) \in \Br \kk(X_1)$.
				\end{remark}
				\vspace{-.05in}
				\begin{remark}
					One can check that, up to a $\Q$-automorphism of $\PP^5$ 
					that commutes with $\sigma$, $Y_1$ and $Y_2$ are the only K3 
					surfaces that arise as double covers of some Enriques 
					surface and that map to a del Pezzo surface which is 
					$\Q$-isomorphic, but not equal, to $S$.
				\end{remark}

	%%%%%%%%%%%%%%%%%%%%%%%%%%%%%%%%%%%%%%%%%%%%%%%%%%%%%%%%%%%%%%%%%%%%%%%%%%%%
	\appendix%%%%%%%%%%%		Tables			%%%%%%%%%%%%%%%%%%%%%%%%%%%%%%%%
	%%%%%%%%%%%%%%%%%%%%%%%%%%%%%%%%%%%%%%%%%%%%%%%%%%%%%%%%%%%%%%%%%%%%%%%%%%%%
	\vspace{-.125in}
	\section{The splitting field of fibers of genus $1$ fibrations}
	\label{app:splittingfield}
		The splitting field $K$ of the genus $1$ curves $C_i, \Ctilde_i, 
		\Dtilde_i, F_i, G_i$ is generated by 
		\begin{align}
			i, \sqrt2, \sqrt5, \sqrt{a}, \sqrt{c}, \sqrt{c^2 - 100ab}, 
			\gamma:= \sqrt{-c^2 - 5bc - 10ac - 25ab},\label{eqn:gens1}\\
			 \sqrt[4]{ab}, \sqrt{-2 + 2\sqrt{2}},
			\sqrt{-c-10\sqrt{ab}}, \label{eqn:gens2}\\
			\theta_0 := \sqrt{4a^2 + b^2},\; \delta_1 := \sqrt{a + b + c/5}, \;	
			\delta_2:= \sqrt{a + b/4 + c/10}, \label{eqn:gens3}\\
			\theta_1^+ := \sqrt{20a^2 -10ab - 2bc + (10a + 2c)\theta_0},\; 
			\theta_2^+ := \sqrt{-5a - 5/2b - 5/2\theta_0}, \label{eqn:gens4}\\
			\xi_1^+ := \sqrt{20a + 10b + 3c + 20\delta_1\delta_2},\;
			\xi_2^+ := \sqrt{4a + 2b + 2/5c + 4\delta_1\delta_2}.
			\label{eqn:gens5}
		\end{align}
		The field extension $K/\Q$ is Galois, as the following relations show:
		\begin{align*}
			\sqrt{-2 - 2\sqrt{2}} = \frac{2i}{\sqrt{-2 + 2\sqrt{2}}},\quad &
			\sqrt{-c + 10\sqrt{ab}} = 
				\frac{\sqrt{c^2 - 100ab}}{\sqrt{-c - 10\sqrt{ab}}}\\
			\theta_1^- := \sqrt{20a^2 -10ab - 2bc + (10a + 2c)\theta_0} 
			= \frac{4a\gamma}{\theta_1^+}, \quad&
			 \theta_2^- := \sqrt{-5a - 5/2b + 5/2\theta_0} 
			= \frac{5\sqrt{ab}}{\theta_2^+}\\
			\xi_1^- := \sqrt{20a + 10b + 3c - 20\delta_1\delta_2} 
			= \frac{\sqrt{c^2 - 100ab}}{\xi_1^+}, \quad &		
			\xi_2^- := \sqrt{4a + 2b + 2/5c - 4\delta_1\delta_2} 
			= \frac{2\gamma}{5\xi_2^+}
		\end{align*}

		Table~\ref{table:curves} lists two linear forms $\ell_1, \ell_2$ next to 
		each divisor class.  On the K3 surface, the vanishing of these linear 
		forms defines a curve representing the corresponding class.  On the 
		Enriques surface, the curve $f(\Yabcbar \cap V(\ell_1, \ell_2))_{\red}$ 
		represents the given divisor class.
		
		%%%%%%%%%%%%%%%%%%%%%%%%%%%%%%%%%%%%%%%%%%%%%%%%%%%%%%%%%%%%%%%%%%%%%%%%
		%%%%%%%%%%%%%%%%%%%%	Generators Table	%%%%%%%%%%%%%%%%%%%%%%%%%%%%
		%%%%%%%%%%%%%%%%%%%%%%%%%%%%%%%%%%%%%%%%%%%%%%%%%%%%%%%%%%%%%%%%%%%%%%%%

		\begin{landscape}
		\begin{table}[!p]
			\begin{tabular}{|c|c|rl||c|c|rl|}
				\hline
				$\Yabcbar$ & $\Xabcbar$ & Defining & equations &
				$\Yabcbar$ & $\Xabcbar$ & Defining & equations \\
				\hline
				$F_1$ & $C_1$ & $s - \sqrt{5}t,$ & $x + 2y$&
				$F_4$ & $C_4$ & $\sqrt{c}s - \sqrt5u,$ & $
								 10ax - (c + \sqrt{c^2-100ab})y$\\

				& $\Ctilde_1$ & $s + \sqrt{5}t,$ & $ x + y$&
				& $\Ctilde_4$ & $\sqrt{c}s + \sqrt5u,$ & $
								 10ax - (c - \sqrt{c^2-100ab})y$\\
				$G_1$ & $D_1$ & $s - \sqrt{5}t,$ & $ x + y$ &
				$G_4$ & $D_4$ & $\sqrt{c}s - \sqrt5u,$ & $
								 10ax - (c - \sqrt{c^2-100ab})y$\\
				& $\Dtilde_1$ & $s + \sqrt{5}t,$ & $ x + 2y$&
				& $\Dtilde_4$ & $\sqrt{c}s + \sqrt5u,$ & $
								 10ax - (c + \sqrt{c^2-100ab})y$\\
				\hline
				$F_2$ & $C_2$ & $\sqrt{-2 + 2\sqrt{2}}s - \sqrt5t,$ & $ 
								 x + \sqrt{2}y + \sqrt{5}(1 - \sqrt{2})z$&
				$F_5$ & $C_5$ & $i\sqrt2\sqrt[4]{ab}s - u,$ & 
							$\sqrt{a}x + \sqrt{b}y + \sqrt{-c - 10\sqrt{ab}}z$\\
				& $\Ctilde_2$ & $\sqrt{-2 + 2\sqrt{2}}s + \sqrt5t,$ & $ 
								 x + \sqrt{2}y - \sqrt{5}(1 - \sqrt{2})z$ &
				& $\Ctilde_5$ & $i\sqrt2\sqrt[4]{ab}s + u,$ & 
							$\sqrt{a}x + \sqrt{b}y - \sqrt{-c - 10\sqrt{ab}}z$\\

				$G_2$ & $D_2$ & $\sqrt{-2 + 2\sqrt{2}}s - \sqrt5t,$ & $ 
								 x + \sqrt{2}y - \sqrt{5}(1 - \sqrt{2})z$&
				$G_5$ & $D_5$ & $i\sqrt2\sqrt[4]{ab}s - u,$ & 
				 			$\sqrt{a}x + \sqrt{b}y - \sqrt{-c - 10\sqrt{ab}}z$\\				

				& $\Dtilde_2$ & $\sqrt{-2 + 2\sqrt{2}}s + \sqrt5t,$ & $ 
								 x + \sqrt{2}y + \sqrt{5}(1 - \sqrt{2})z$&
				& $\Dtilde_5$ & $i\sqrt2\sqrt[4]{ab}s + u,$ & 
							$\sqrt{a}x + \sqrt{b}y + \sqrt{-c - 10\sqrt{ab}}z$\\

				\hline
				$F_3$ & $C_3$ & $\sqrt{-2 - 2\sqrt{2}}s - \sqrt5t,$ & $
								 x - \sqrt2y + \sqrt5(1 + \sqrt2)z$&
				$F_6$ & $C_6$ & $\sqrt2\sqrt[4]{ab}s + u,$ & 
							$\sqrt{a}x - \sqrt{b}y + \sqrt{-c + 10\sqrt{ab}}z$\\
				& $\Ctilde_3$ & $\sqrt{-2 - 2\sqrt{2}}s + \sqrt5t,$ & $
								 x - \sqrt2y - \sqrt5(1 + \sqrt2)z$&
				& $\Ctilde_6$ & $\sqrt2\sqrt[4]{ab}s - u,$ & 
							$\sqrt{a}x - \sqrt{b}y - \sqrt{-c + 10\sqrt{ab}}z$\\
				$G_3$ & $D_3$ & $\sqrt{-2 - 2\sqrt{2}}s - \sqrt5t,$ & $
								 x - \sqrt2y - \sqrt5(1 + \sqrt2)z$ &
				$G_6$ & $D_6$ & $\sqrt2\sqrt[4]{ab}s + u,$ & 
							$\sqrt{a}x - \sqrt{b}y - \sqrt{-c + 10\sqrt{ab}}z$\\
				& $\Dtilde_3$ & $\sqrt{-2 - 2\sqrt{2}}s + \sqrt5t,$ & $
								 x - \sqrt2y + \sqrt5(1 + \sqrt2)z$ &
				& $\Dtilde_6$ & $\sqrt2\sqrt[4]{ab}s - u,$ & 
							$\sqrt{a}x - \sqrt{b}y + \sqrt{-c + 10\sqrt{ab}}z$\\
				\hline
				\end{tabular}
				\label{table:curves2}
			\end{table}
			\begin{table}[!p]
				\begin{tabular}{|c|c|rl||c|c|rl|}
					\hline
					$\Yabcbar$ & $\Xabcbar$ & Defining & equations &
					$\Yabcbar$ & $\Xabcbar$ & Defining & equations \\
					\hline

				\hline
				$F_7$ & $C_7$ & $\sqrt{c}t - u,$ & $(5a + c)x + (c + \gamma)y$ &
				$F_{10}$ && $s - \sqrt5z,$ & $x$\\
				& $\Ctilde_7$ & $\sqrt{c}t + u,$ & $(5a + c)x + (c - \gamma)y$ &
				$G_{10}$ && $s - \sqrt5z,$ & $y$\\\cline{5-8}
				$G_7$ & $D_7$ & $\sqrt{c}t - u,$ & $(5a + c)x + (c - \gamma)y$ &
				$F_{11}$ && $u - \sqrt{c}z,$ & $\sqrt{a}x + i\sqrt{b}y$\\
				& $\Dtilde_7$ & $\sqrt{c}t + u,$ & $(5a + c)x + (c + \gamma)y$ &
				$G_{11}$ && $u - \sqrt{c}z,$ & $\sqrt{a}x - i\sqrt{b}y$\\
				\hline
				$F_8$ & $C_8$ & $\theta_2^+ t - u,$
				&$x + (2a + b + \theta_0)/(b + \theta_0)y - \theta_1^+/(2a)z$ &
				$F_{12}$ && $t - z,$ & $x + (1 - i)y$\\
				& $\Ctilde_8$ & $\theta_2^+ t + u,$
				&$x + (2a + b + \theta_0)/(b + \theta_0)y + \theta_1^+/(2a)z$ &
				$G_{12}$ && $t - z,$ & $x + (1 + i)y$\\\cline{5-8}
				$G_8$ & $D_8$ & $\theta_2^+ t - u,$
				&$x + (2a + b + \theta_0)/(b + \theta_0)y + \theta_1^+/(2a)z$ &
				$F_{13}$ && $\xi_2^+s - \xi_1^+t,$&
					$(\delta_1 + 2\delta_2)x + (2\delta_1 + 2\delta_2)y - u$\\
				& $\Dtilde_8$ & $\theta_2^+ t + u,$
				&$x + (2a + b + \theta_0)/(b + \theta_0)y - \theta_1^+/(2a)z$ &
				$G_{13}$ && $\xi_2^+s - \xi_1^+t,$&
					$(\delta_1 + 2\delta_2)x + (2\delta_1 + 2\delta_2)y + u$\\
				\hline
				$F_9$ & $C_9$ & $\theta_2^- t - u,$
				&$x + (2a + b - \theta_0)/(b - \theta_0)y - \theta_1^-/(2a)z$ &
				$F_{14}$ && $\xi_2^-s - \xi_1^-t,$&
					$(\delta_1 - 2\delta_2)x + (2\delta_1 - 2\delta_2)y - u$\\
				& $\Ctilde_9$ & $ \theta_2^- t + u,$
				&$x + (2a + b - \theta_0)/(b - \theta_0)y + \theta_1^-/(2a)z$ &
				$G_{14}$ && $\xi_2^-s - \xi_1^-t,$&
					$(\delta_1 - 2\delta_2)x + (2\delta_1 - 2\delta_2)y + u$\\
					\cline{5-8}
				$G_9$ & $D_9$ & $\theta_2^- t - u,$
				&$x + (2a + b - \theta_0)/(b - \theta_0)y + \theta_1^-/(2a)z$ 
				&&&&\\
				& $\Dtilde_9$ & $\theta_2^- t + u,$
				&$x + (2a + b - \theta_0)/(b - \theta_0)y - \theta_1^-/(2a)z$ 
				&&&&\\
				\hline
			\end{tabular}
			\caption{Defining equations of a curve representing a divisor class}
			\label{table:curves}
		\end{table}

	%%%%%%%%%%%%%%%%%%%%%%%%%%%%%%%%%%%%%%%%%%%%%%%%%%%%%%%%%%%%%%%%%%%%%%%%%%%%
	%%%%%%%%%%%%%%%%%%%%%%%%%	Galois Action Table		%%%%%%%%%%%%%%%%%%%%%%%%
	%%%%%%%%%%%%%%%%%%%%%%%%%%%%%%%%%%%%%%%%%%%%%%%%%%%%%%%%%%%%%%%%%%%%%%%%%%%%
	\begin{table}
		\begin{tabular}{|c|l|l||c|l|l||c|l|l|}
			\hline
			Action on & Action on & Action on & 
			Action on & Action on & Action on & 
			Action on & Action on & Action on \\
			splitting field & $\Pic\Xabcbar$ & $\Pic \Yabcbar$ &
			splitting field & $\Pic\Xabcbar$ & $\Pic \Yabcbar$ &
			splitting field & $\Pic\Xabcbar$ & $\Pic \Yabcbar$\\
			\hline
			$\sqrt5\mapsto-\sqrt5$ & $C_1 \mapsto D_1$ 
			& $F_1 \leftrightarrow G_1$
			& $i\mapsto -i$ & $C_3 \mapsto \Dtilde_3$ & $F_3 \mapsto G_3$
			& $\sqrt[4]{ab}\mapsto i\sqrt[4]{ab}$ & $C_5\mapsto C_6$ 
				& $F_5 \mapsto F_6$ \\

			& $D_1 \leftrightarrow \Ctilde_1$ & & & $C_5 \mapsto \Dtilde_5$ &
				$F_5 \mapsto G_5$
			& $\sqrt{- c - 10\sqrt{ab}} \leftrightarrow$ 
			& $C_6 \mapsto \Dtilde_5$ & $F_6 \mapsto G_5$\\

			& $C_2 \mapsto \Ctilde_2$ & & & & $F_{11} \mapsto G_{11}$ &
			$\sqrt{-c + 10\sqrt{ab}}$ & $C_9 \mapsto \Dtilde_9$ 
				& $F_9 \mapsto G_9$\\

			& $C_3 \mapsto \Ctilde_3$ & & & & $F_{12} \mapsto G_{12}$ & &&
				$F_{11} \mapsto G_{11}$\\

			& $C_4 \mapsto \Dtilde_4$ & $F_4 \mapsto G_4$ &&&&&&\\
			& & $F_{10} \mapsto G_{10}$ &&&&&& \\ 
			\hline

			$\sqrt{c} \mapsto -\sqrt{c}$ & $C_4 \mapsto \Dtilde_4$ 
			& $F_4 \mapsto G_4$ & $\gamma \mapsto -\gamma$ & $C_7 \mapsto D_7$
			& $F_7 \mapsto G_7$ & $\sqrt{c^2 - 100ab} \mapsto$ 
			& $C_4 \mapsto D_4$ & $F_4 \mapsto G_4$\\

			& $C_7 \mapsto \Dtilde_7$ & $F_7 \mapsto G_7$ && $C_9 \mapsto D_9$ & 
			$F_9 \mapsto G_9$ & $-\sqrt{c^2 - 100ab}$ & $C_6 \mapsto D_6$ 
			& $F_6 \mapsto G_6$\\

			& & $F_{11} \mapsto G_{11}$ & & & $F_{14} \mapsto G_{14}$ &
			 &  & $F_{14} \mapsto G_{14}$\\
			\hline 

			$\delta_1 \mapsto -\delta_1$ & & $F_{13} \mapsto G_{14}$
			& $\sqrt{a} \mapsto -\sqrt{a}$ & $C_5 \mapsto D_5$ 
			& $C_6 \mapsto D_6$
			& $\sqrt2 \mapsto -\sqrt2$ & $C_2 \leftrightarrow C_3$ 
			& $F_2 \leftrightarrow F_3$\\

			$\xi_i^+\leftrightarrow \xi_i^-$ & & $F_{14} \mapsto G_{13}$ &
			& $C_6\mapsto D_6$ & $F_6 \mapsto G_6$ &
			$\sqrt{-2 + 2\sqrt{2}}\leftrightarrow$ & $C_5 \mapsto \Dtilde_5$
			& $F_5 \mapsto G_5$\\

			&&&&&& $\sqrt{-2 -2\sqrt{2}}$ & $C_6 \mapsto \Dtilde_6$ 
			& $F_6 \mapsto G_6$\\
			\hline

			$\delta_2 \mapsto -\delta_2$ & & $F_{13} \leftrightarrow F_{14}$ &
			$\theta_0 \mapsto -\theta_0$ & $C_8 \leftrightarrow C_9$ 
			& $F_8 \leftrightarrow F_9$ & $\sqrt{-2 + 2\sqrt{2}} \mapsto$
			& $C_2 \mapsto \Dtilde_2$ & $F_2 \mapsto G_2$\\

			$\xi_i^+\leftrightarrow \xi_i^-$ & & & 
			$\theta_i^+\leftrightarrow\theta_i^-$ & & &
			$-\sqrt{-2 + 2\sqrt{2}}$ & $C_3\mapsto\Dtilde_3$ 
			& $F_3 \mapsto G_3$\\
			\hline

			$\xi_1^+ \mapsto -\xi_1^+$ & & $F_{13} \mapsto G_{13}$ &
			$\theta_1^+ \mapsto -\theta_1^+$ & $C_8 \mapsto D_8$ 
			& $F_8 \mapsto G_8$
			& $\sqrt{-c-10\sqrt{ab}} \mapsto$ & $C_5 \mapsto D_5$ 
			& $F_5 \mapsto G_5$\\

			&&$F_{14} \mapsto G_{14}$ & & $C_9 \mapsto D_9$ & $F_9 \mapsto G_9$ 
			& $-\sqrt{-c-10\sqrt{ab}}$ & $C_6 \mapsto D_6$ & $F_6 \mapsto G_6$\\
			\hline

			$\xi_2^+ \mapsto -\xi_2^+$ & & $F_{13} \mapsto G_{13}$ &
			$\theta_2^+ \mapsto -\theta_2^+$ & $C_8 \mapsto \Dtilde_8$ 
			& $F_8 \mapsto G_8$
			&  &  & \\

			&&$F_{14} \mapsto G_{14}$ & & $C_9 \mapsto \Dtilde_9$ 
			& $F_9 \mapsto G_9$ &&& \\
			\hline
		\end{tabular}
		\caption{The Galois action on the fibers of the genus 1 fibrations. 
		\newline The action on the splitting field is described by the action on 
		the generators listed in~\ref{eqn:gens1},~\ref{eqn:gens2},
		~\ref{eqn:gens3},~\ref{eqn:gens4},~\ref{eqn:gens5}.  If a generator of 
		$K$ is not listed, then we assume that it is fixed.  We use the same 
		convention for the curves.}\label{table:galois}
	\end{table}
	\end{landscape}

	%%%%%%%%%%%%%%%%%%%%%%%%%%%%%%%%%%%%%%%%%%%%%%%%%%%%%%%%%%%%%%%%%%%%%%%%
	%%%%%%%%%%%%%%%   		Bibliography				%%%%%%%%%%%%%%%%%%%%
	%%%%%%%%%%%%%%%%%%%%%%%%%%%%%%%%%%%%%%%%%%%%%%%%%%%%%%%%%%%%%%%%%%%%%%%%

	\begin{bibdiv}
		\begin{biblist}

			\bib{Artin-SwinnertonDyer}{article}{
			   author={Artin, M.},
			   author={Swinnerton-Dyer, H. P. F.},
			   title={The Shafarevich-Tate conjecture for pencils of elliptic 
					curves on $K3$ surfaces},
			   journal={Invent. Math.},
			   volume={20},
			   date={1973},
 			  pages={249--266},
 			  issn={0020-9910},
 			  review={\MR{0417182 (54 \#5240)}},
			}

			\bib{BS-beyondmanin}{article}{
			   author={Basile, Carmen Laura},
			   author={Skorobogatov, Alexei N.},
			   title={On the Hasse principle for bielliptic surfaces},
			   conference={
			      title={Number theory and algebraic geometry},
			   },
			   book={
			      series={London Math. Soc. Lecture Note Ser.},
			      volume={303},
			      publisher={Cambridge Univ. Press},
			      place={Cambridge},
			   },
			   date={2003},
			   pages={31--40},
			   review={\MR{2053453 (2005e:14056)}},
			}

			\bib{Beauville-surfaces}{book}{
			   author={Beauville, Arnaud},
			   title={Complex algebraic surfaces},
			   series={London Mathematical Society Student Texts},
			   volume={34},
			   edition={2},
			   note={Translated from the 1978 French original by R. Barlow,
			 		with assistance from N. I. Shepherd-Barron and M. Reid},
			   publisher={Cambridge University Press},
			   place={Cambridge},
			   date={1996},
			   pages={x+132},
			   isbn={0-521-49510-5},
			   isbn={0-521-49842-2},
			   review={\MR{1406314 (97e:14045)}},
			}

			\bib{Beauville-BrEnr}{article}{
			   author={Beauville, Arnaud},
			   title={On the Brauer group of Enriques surfaces},
			   journal={Math. Res. Lett.},
			   volume={16},
			   date={2009},
			   number={6},
			   pages={927--934},
			   issn={1073-2780},
			   review={\MR{2576681}},
			}

			\bib{BSD-hasse}{article}{
			   author={Birch, B. J.},
			   author={Swinnerton-Dyer, H. P. F.},
			   title={The Hasse problem for rational surfaces},
			   note={Collection of articles dedicated to Helmut Hasse on his
			   seventy-fifth birthday, III},
			   journal={J. Reine Angew. Math.},
			   volume={274/275},
			   date={1975},
			   pages={164--174},
			   issn={0075-4102},
			   review={\MR{0429913 (55 \#2922)}},
			}

			\bib{magma}{article}{
			   author={Bosma, Wieb},
			   author={Cannon, John},
			   author={Playoust, Catherine},
			   title={The Magma algebra system. I. The user language},
			   note={Computational algebra and number theory (London, 1993)},
			   journal={J. Symbolic Comput.},
			   volume={24},
			   date={1997},
			   number={3-4},
			   pages={235--265},
			   issn={0747-7171},
			   review={\MR{1484478}},
			   doi={10.1006/jsco.1996.0125},
			}

			\bib{CTS-descent}{article}{
			   author={Colliot-Th{\'e}l{\`e}ne, J.-L.},
			   author={Sansuc, J.-J.},
			   title={La descente sur les vari\'et\'es rationnelles},
			   language={French},
			   conference={
			      title={Journ\'ees de G\'eom\'etrie Alg\'ebrique d'Angers, 
					Juillet 1979/Algebraic Geometry, Angers, 1979},
			   },
			   book={
			      publisher={Sijthoff \& Noordhoff},
			      place={Alphen aan den Rijn},
			   },
			   date={1980},
			   pages={223--237},
			   review={\MR{605344 (82d:14016)}},
			}
			\bib{Cossec-projmodels}{article}{
			   author={Cossec, Fran{\c{c}}ois R.},
			   title={Projective models of Enriques surfaces},
			   journal={Math. Ann.},
			   volume={265},
			   date={1983},
			   number={3},
			   pages={283--334},
			   issn={0025-5831},
			   review={\MR{721398 (86d:14035)}},
			   doi={10.1007/BF01456021},
			}

			\bib{CD-Enriques}{book}{
			   author={Cossec, Fran{\c{c}}ois R.},
			   author={Dolgachev, Igor V.},
			   title={Enriques surfaces. I},
			   series={Progress in Mathematics},
			   volume={76},
			   publisher={Birkh\"auser Boston Inc.},
			   place={Boston, MA},
			   date={1989},
			   pages={x+397},
			   isbn={0-8176-3417-7},
			   review={\MR{986969 (90h:14052)}},
			}

			\bib{Cunnane-thesis}{misc}{
				author={Cunnane, Steven},
				title = {Rational points on Enriques surfaces},
				date={2007},
				note={Imperial College London, Ph. D. Thesis},
			}

			\bib{Flynn-BM}{article}{
			   author={Flynn, E. V.},
			   title={The Hasse principle and the Brauer-Manin obstruction for 
					curves},
			   journal={Manuscripta Math.},
			   volume={115},
			   date={2004},
			   number={4},
			   pages={437--466},
			   issn={0025-2611},
			   review={\MR{2103661 (2005j:11047)}},
			   doi={10.1007/s00229-004-0502-9},
			}
			
			\bib{EJ-signs}{article}{
 				author={Elsenhans, Andreas-Stephan},
   				author={Jahnel, J{\"o}rg},
  				title={On the Weil polynomials of $K3$ surfaces},
   					conference={
      						title={Algorithmic number theory},
   					},
   				book={
      					series={Lecture Notes in Comput. Sci.},
      					volume={6197},
      					publisher={Springer},
      					place={Berlin},
   					},
   				date={2010},
   				pages={126--141},
			}

			\bib{Harari-transcendental}{article}{
			   author={Harari, David},
			   title={Obstructions de Manin transcendantes},
			   language={French},
			   conference={
			      title={Number theory},
			      address={Paris},
			      date={1993--1994},
			   },
			   book={
			      series={London Math. Soc. Lecture Note Ser.},
			      volume={235},
			      publisher={Cambridge Univ. Press},
			      place={Cambridge},
			   },
			   date={1996},
			   pages={75--87},
			   review={\MR{1628794 (99e:14025)}},
			   doi={10.1017/CBO9780511662003.004},
			}

			\bib{HS-Enriques}{article}{
			   author={Harari, David},
			   author={Skorobogatov, Alexei N.},
			   title={Non-abelian descent and the arithmetic of Enriques
			 	surfaces},
			   journal={Int. Math. Res. Not.},
			   date={2005},
			   number={52},
			   pages={3203--3228},
			   issn={1073-7928},
			   review={\MR{2186792 (2006m:14031)}},
			}

			\bib{Lind-HP}{article}{
			   author={Lind, Carl-Erik},
			   title={Untersuchungen \"uber die rationalen Punkte der ebenen 
					kubischen Kurven vom Geschlecht Eins},
			   language={German},
			   journal={Thesis, University of Uppsala,},
			   volume={1940},
			   date={1940},
			   pages={97},
			   review={\MR{0022563 (9,225c)}},
			}

			\bib{LLR}{article}{
			   author={Liu, Qing},
			   author={Lorenzini, Dino},
			   author={Raynaud, Michel},
 			   title={On the Brauer group of a surface},
			   journal={Invent. Math.},
			   volume={159},
			   date={2005},
 			  number={3},
			   pages={673--676},
			   issn={0020-9910},
 			  review={\MR{2125738 (2005k:14036)}},
 			  doi={10.1007/s00222-004-0403-2},
			}

			\bib{Manin-obstruction}{article}{
			   author={Manin, Y. I.},
			   title={Le groupe de Brauer-Grothendieck en g\'eom\'etrie 
					diophantienne},
			   conference={
			      title={Actes du Congr\`es International des Math\'ematiciens},
			      address={Nice},
			      date={1970},
			   },
			   book={
			      publisher={Gauthier-Villars},
			      place={Paris},
			   },
			   date={1971},
			   pages={401--411},
			   review={\MR{0427322 (55 \#356)}},
			}

			\bib{Milne-AT}{article}{
 			  author={Milne, J. S.},
			   title={On a conjecture of Artin and Tate},
			   journal={Ann. of Math. (2)},
			   volume={102},
			   date={1975},
			   number={3},
			   pages={517--533},
			   issn={0003-486X},
			   review={\MR{0414558 (54 \#2659)}},
			}

			\bib{Poonen-heuristics}{article}{
			   author={Poonen, Bjorn},
			   title={Heuristics for the Brauer-Manin obstruction for curves},
			   journal={Experiment. Math.},
			   volume={15},
			   date={2006},
			   number={4},
			   pages={415--420},
			   issn={1058-6458},
			   review={\MR{2293593 (2008d:11062)}},
			}

			\bib{Poonen-insufficiency}{article}{
				author={Poonen, Bjorn},
				title={Insufficiency of the Brauer-Manin obstruction applied 
					to \'etale covers},
				journal={Annals of Math.}
				volume={171}
				date={2010}
				number={3},
				pages={2157–-2169}
			}

			\bib{Reichardt-HP}{article}{
			   author={Reichardt, Hans},
			   title={Einige im Kleinen \"uberall l\"osbare, im Gro\ss{}en 
				unl\"osbare diophantische Gleichungen},
			   language={German},
			   journal={J. Reine Angew. Math.},
			   volume={184},
			   date={1942},
			   pages={12--18},
			   issn={0075-4102},
			   review={\MR{0009381 (5,141c)}},
			}
			
			\bib{Scharaschkin-thesis}{misc}{
				author={Scharaschkin, Victor},
				title = {Local global problems and the 
						 Brauer-Manin obstruction},
				date={1999},
				note={University of Michigan, Ph. D. Thesis},
			}			
			\bib{Serre-LinReps}{book}{
 			  author={Serre, Jean-Pierre},
 			  title={Linear representations of finite groups},
			   note={Translated from the second French edition by Leonard L. 
					Scott; Graduate Texts in Mathematics, Vol. 42},
			   publisher={Springer-Verlag},
			   place={New York},
			   date={1977},
 			  pages={x+170},
 			  isbn={0-387-90190-6},
			   review={\MR{0450380 (56 \#8675)}},
			}

			\bib{Skorobogatov-beyondmanin}{article}{
			   author={Skorobogatov, Alexei N.},
			   title={Beyond the Manin obstruction},
			   journal={Invent. Math.},
			   volume={135},
			   date={1999},
			   number={2},
			   pages={399--424},
			   issn={0020-9910},
			   review={\MR{1666779 (2000c:14022)}},
			}

			\bib{Skorobogatov-torsors}{book}{
			   author={Skorobogatov, Alexei N.},
			   title={Torsors and rational points},
			   series={Cambridge Tracts in Mathematics},
			   volume={144},
			   publisher={Cambridge University Press},
			   place={Cambridge},
			   date={2001},
			   pages={viii+187},
			   isbn={0-521-80237-7},
			   review={\MR{1845760 (2002d:14032)}},
			}

			\bib{Stoll-BM}{article}{
			   author={Stoll, Michael},
			   title={Finite descent obstructions and rational points on 
					curves},
			   journal={Algebra Number Theory},
			   volume={1},
			   date={2007},
			   number={4},
			   pages={349--391},
			   issn={1937-0652},
			   review={\MR{2368954 (2008i:11086)}},
			   doi={10.2140/ant.2007.1.349},
			}

			\bib{ST-rationalbox}{misc}{
				author={Stoll, Michael},
				author={Testa, Damiano},
				title={The Picard group of the space of cuboids},
				date={2010},
				note={preprint},
			}

			\bib{vanLuijk-pic1}{article}{
			   author={van Luijk, Ronald},
			   title={K3 surfaces with Picard number one and infinitely many 
						rational points},
			   journal={Algebra Number Theory},
			   volume={1},
			   date={2007},
			   number={1},
			   pages={1--15},
			   issn={1937-0652},
			   review={\MR{2322921 (2008d:14058)}},
			   doi={10.2140/ant.2007.1.1},
			}

			\bib{Verra-Enriques}{article}{
			   author={Verra, Alessandro},
			   title={The \'etale double covering of an Enriques surface},
			   journal={Rend. Sem. Mat. Univ. Politec. Torino},
			   volume={41},
			   date={1983},
			   number={3},
			   pages={131--167 (1984)},
			   issn={0373-1243},
			   review={\MR{778864 (86m:14029)}},
			}
			
		\end{biblist}
	\end{bibdiv}

\end{document}